\documentclass[a4paper, 11pt]{amsart}
\usepackage{graphicx}
\usepackage[all]{xy}

\newcommand{\ko}{\;\; ,}

\setcounter{tocdepth}{1}

\numberwithin{equation}{subsection}
\newtheorem{theorem}[subsection]{Theorem}
\newtheorem{classification-theorem}[subsection]{Classification Theorem}
\newtheorem{decomposition-theorem}[subsection]{Decomposition Theorem}
\newtheorem{proposition-definition}[subsection]{Proposition-Definition}
\newtheorem{periodicity-conjecture}[subsection]{Periodicity Conjecture}
\newtheorem{lemma}[subsection]{Lemma}
\newtheorem{proposition}[subsection]{Proposition}
\newtheorem{corollary}[subsection]{Corollary}

\newtheorem{assumption}[subsection]{Assumption}

\newtheorem{remark}[subsection]{Remark}

\newcommand{\rad}{\operatorname{rad}\nolimits}
\newcommand{\res}{\operatorname{res}\nolimits}
\newcommand{\can}{\operatorname{can}\nolimits}

\newcommand{\Gr}{\operatorname{{Gr}}\nolimits}

\newcommand{\Hom}{\operatorname{Hom}\nolimits}
\newcommand{\im}{\operatorname{im}\nolimits}
\newcommand{\cok}{\operatorname{cok}\nolimits}

\newcommand{\Ext}{\operatorname{Ext}\nolimits}

\newcommand{\Mod}{\operatorname{Mod }\nolimits}
\renewcommand{\mod}{\operatorname{mod}\nolimits}
\newcommand{\grm}{\operatorname{grm}\nolimits}
\newcommand{\rep}{\operatorname{rep}\nolimits}
\newcommand{\proj}{\operatorname{proj}\nolimits}

\newcommand{\ind}{\operatorname{ind}\nolimits}

\newcommand{\Z}{\operatorname{\mathbb{Z}}\nolimits}

\newcommand{\N}{\operatorname{\mathbb{N}}\nolimits}

\newcommand{\dimv}{\operatorname{\underline{dim}}\nolimits}

\newcommand{\GL}{\operatorname{Gl}\nolimits}
\newcommand{\ten}{\otimes}

\newcommand{\bt}{\bullet}

\newcommand{\iso}{\stackrel{_\sim}{\rightarrow}}

\newcommand{\id}{\mathbf{1}}

\newcommand{\ca}{{\mathcal A}}
\newcommand{\cb}{{\mathcal B}}
\newcommand{\cc}{{\mathcal C}}
\newcommand{\cd}{{\mathcal D}}
\newcommand{\ce}{{\mathcal E}}

\newcommand{\ch}{{\mathcal H}}

\newcommand{\cm}{{\mathcal M}}

\newcommand{\cp}{{\mathcal P}}
\newcommand{\cR}{{\mathcal R}}

\newcommand{\cs}{{\mathcal S}}

\newcommand{\cv}{{\mathcal V}}

\newcommand{\eps}{\varepsilon}
\renewcommand{\tilde}[1]{\widetilde{#1}}
\newcommand{\ul}[1]{\underline{#1}}

\renewcommand{\hat}[1]{\widehat{#1}}
\begin{document}
\baselineskip=15pt
\title[Desingularizations via quiver varieties]{Desingularizations of quiver Grassmannians\\
via graded quiver varieties}
\author{Bernhard Keller and Sarah Scherotzke}
\address{B.~K.~: Universit\'e Paris Diderot -- Paris~7, Institut
Universitaire de France, UFR de Math\'ematiques, Institut de
Math\'ematiques de Jussieu, UMR 7586 du CNRS, Case 7012, B\^atiment
Sophie Germain, 75205 Paris Cedex 13, France}
\address{S.~S.~: University of Bonn, Mathematisches Institut, 
Endenicher Allee 60, 53115 Bonn, Germany}

\email{keller@math.jussieu.fr, sarah@math.uni-bonn.de}

\keywords{Quiver Grassmannians, Nakajima quiver varieties}
\subjclass[2010]{13F60, 14M15, 16G20}

\begin{abstract} 
Inspired by recent work of Cerulli--Feigin--Reineke on desingularizations
of quiver Grassmannians of representations of Dynkin quivers, we
obtain desingularizations in considerably more general situations and in
particular for Grassmannians of modules over iterated tilted algebras of Dynkin type.  
Our desingularization map is constructed from Nakajima's desingularization map 
for graded quiver varieties. 
\end{abstract}

\maketitle

\tableofcontents

\section{Introduction and main results}
\label{s:intro}
A quiver Grassmannian is the variety of subrepresentations with
given dimension vector of a fixed quiver representation. To the best
of the authors' knowledge, quiver Grassmannians first appeared in 
Schofield's work \cite{Schofield92a}. They are projective varieties and 
Reineke shows in \cite{Reineke12} that every projective variety can be 
realized as a quiver Grassmannian 
(we refer to the final example of Hille's \cite{Hille96} for a similar, and in fact
closely related result, and to Ringel's \cite{Ringel13} for an analogous
`universality theorem' in the setting of Auslander algebras). 
Caldero-Chapoton discovered
\cite{CalderoChapoton06} that the canonical generators
of Fomin-Zelevinsky's  cluster algebras \cite{FominZelevinsky02}
can be interpreted as generating polynomials of Euler characteristics
of quiver Grassmannians. Since then, quiver Grassmannians have 
played an important role in the additive categorification of 
(quantum) cluster algebras,
cf. for example \cite{CalderoChapoton06} \cite{CalderoKeller08}
\cite{DerksenWeymanZelevinsky10} \cite{Nakajima11} \cite{Qin12} \cite{Efimov11}. 

In \cite{CerulliFeiginReineke12}, \cite{CerulliFeiginReineke12b},
Cerulli--Feigin--Reineke initiated a systematic study of 
(singular) quiver Grassmannians of Dynkin quivers, starting from the 
surprising observation that the type $A$
degenerate flag varieties studied in \cite{Feigin11} \cite{Feigin12}
\cite{FeiginFinkelberg11} are of this form.
An important aspect of their work is the construction of
desingularizations, which they achieve in their recent
paper \cite{CerulliFeiginReineke12a}
generalizing \cite{FeiginFinkelberg11}.
In \cite{CerulliFeiginReineke13}, they link these desingularizations to a construction
by Hernandez--Leclerc \cite{HernandezLeclerc10}, which
has been generalized by Leclerc--Plamondon \cite{LeclercPlamondon12}
and further generalized by the present authors in \cite{KellerScherotzke13a}.

In this article, we build on \cite{KellerScherotzke13a}  
to construct desingularizations of quiver Grassmannians 
in much more general situations and in particular for all modules 
over the repetitive algebra of  an arbitrary iterated tilted algebra $B$ of Dynkin
type, like the algebra $B$ given by the square quiver
\[
\xymatrix{
1 \ar[d] \ar[r] & 2 \ar[d] \\
3 \ar[r] & 4
}
\]
with the commutativity relation ($B$ is tilted of type $D_4$).
The main ingredient of our construction is the desingularization map for 
graded quiver varieties introduced by Nakajima \cite{Nakajima01} \cite{Nakajima11}
and generalized from bipartite to acylic quivers by Qin \cite{Qin12}
\cite{Qin12a}.

More precisely, we consider a module $M$ over the singular
Nakajima category $\cs$ associated with an acyclic quiver $Q$,
cf. section~\ref{ss:the-Nakajima-categories}.
By \cite{LeclercPlamondon12}, such
a module corresponds to a point in the graded affine quiver
variety $\cm_0(d)$ associated with $Q$ and the dimension
vector $d=\dimv M$ of $M$. Nakajima has constructed a pre-desingularization
(i.e. a proper, surjective morphism with smooth domain)
\[
\pi: \cm(d) \to \cm_0(d)
\] 
of $\cm_0(d)$. Here the points of $\cm(d)$ can be interpreted as 
(orbits of stable) representations of the regular Nakajima category $\cR$,
which contains $\cs$ as a full subcategory, and the 
map $\pi$ takes a representation $L$ of 
$\cR$ to its restriction $\res(L)$ to $\cs\subset\cR$. 
We will show that for suitable modules $M$,
each quiver Grassmannian of $M$ admits a desingularization
by a disjoint union of connected components of quiver Grassmannians 
of a distinguished point in the fiber of $\pi$ over $M$, namely the so-called
intermediate Kan extension $K_{LR}(M)$ of
section~2.10 of \cite{KellerScherotzke13a}.

Let us describe our main results more precisely. 
Let $M$ be a finite-dimensional $\cs$-module
of dimension vector $d$.
Let $w$ be a dimension vector less or equal
to $d$. Using Nakajima's stratification
of $\cm_0(d)$, we assign a dimension vector
$(v_C,w)$ of $\cR$ with each irreducible component
$C$ of the quiver Grassmannian $\Gr_{w}(M)$, cf.~Lemma~\ref{lemma:definition-vc}.
Let $\cv_w(M)$ be the set of the vectors $v_C$.
Recall that a module is {\em rigid} if its space
of selfextensions vanishes. The following result
is modeled on Theorem~7.4 of \cite{CerulliFeiginReineke12a}
with the intermediate Kan extension $K_{LR}(M)$ playing
the role of the module $\hat{M}$ of [loc. cit.].

\begin{theorem}[Theorem~\ref{thm:desingularization}]
\label{thm:intro-pre-desingularization}
Suppose that $K_{LR}(M)$ is rigid. Then the map
\[
\pi_{\Gr}: \coprod_{v\in \cv_w(M)}\Gr_{(v,w)}(K_{LR}(M)) \to 
\Gr_{w}(M)
\]
taking $U\subset K_{LR}(M)$ to $\res(U) \subset M$ is a pre-desingularization
(a proper, surjective morphism with smooth domain). 
\end{theorem}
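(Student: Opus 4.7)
The plan is to verify separately the three conditions for a pre-desingularization: smoothness of the source, properness, and surjectivity.

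For smoothness, I would use the rigidity of $K_{LR}(M)$. The Zariski tangent space to $\Gr_{(v,w)}(K_{LR}(M))$ at a point $U$ is $\Hom(U, K_{LR}(M)/U)$ and the obstructions lie in $\Ext^1(U, K_{LR}(M)/U)$. Applying $\Hom(-, K_{LR}(M)/U)$ to the short exact sequence $0 \to U \to K_{LR}(M) \to K_{LR}(M)/U \to 0$ and using $\Ext^1(K_{LR}(M), K_{LR}(M)) = 0$ together with the homological boundedness available for $\cR$-modules, one obtains the vanishing of $\Ext^1(U, K_{LR}(M)/U)$. This is the standard smoothness criterion for quiver Grassmannians of rigid modules, which goes back to Caldero--Reineke and is by now routine in the Nakajima--Qin setting. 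Properness is then formal: the index set $\cv_w(M)$ is finite, each $\Gr_{(v,w)}(K_{LR}(M))$ is projective, so the source is complete, and the target $\Gr_w(M)$ is separated.

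The morphism property of $\pi_{\Gr}$ follows because the restriction functor $\res: \mod\cR \to \mod\cs$ is algebraic; the identity $\res(K_{LR}(M)) = M$ from the defining property of the intermediate Kan extension, combined with the convention that the dimension vector $(v,w)$ has $\cs$-part equal to $w$, guarantees that $\res(U)$ really lies in $\Gr_w(M)$.

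Surjectivity, the crux of the argument, requires producing for each $N \in \Gr_w(M)$ a preimage in the source. The natural candidate is $U := K_{LR}(N)$, realized as a subrepresentation of $K_{LR}(M)$ via the map $K_{LR}(N) \to K_{LR}(M)$ induced functorially by the inclusion $N \hookrightarrow M$. Injectivity of this map follows from the factorization $K_{LR}(N) \hookrightarrow R(N) \hookrightarrow R(M)$, in which the first inclusion is definitional and the second uses the left-exactness of the right Kan extension $R$ applied to the mono $N \hookrightarrow M$. The identity $\res(K_{LR}(N)) = N$ then yields $\pi_{\Gr}(U) = N$.

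The hard part will be to verify that the dimension vector of this lift $U$ is of the form $(v, w)$ with $v \in \cv_w(M)$, so that $U$ actually lies in the source of $\pi_{\Gr}$. This amounts to identifying the $\cR\setminus\cs$-dimension vector of $K_{LR}(N)$ with the vector $v_C$ attached by Lemma~\ref{lemma:definition-vc} to the irreducible component $C$ of $\Gr_w(M)$ containing $N$. I would approach this by comparing Nakajima's stratification of $\cm_0(d)$ (which underlies the definition of $v_C$) with the behaviour of the intermediate Kan extension on generic submodules of $M$, and then invoking an upper-semicontinuity argument for $\dim K_{LR}(N)$ along each irreducible component to handle the non-generic points. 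This comparison should simultaneously yield the desired surjectivity onto each irreducible component from the single piece $\Gr_{(v_C, w)}(K_{LR}(M))$.
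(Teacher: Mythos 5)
Your plan for smoothness, properness, and the morphism property is sound and broadly matches the paper (which routes smoothness through Proposition~7.1 of Cerulli--Feigin--Reineke, using rigidity of $K_{LR}(M)$, the bound $\operatorname{gldim} \le 2$, and finite-dimensionality). The gap is in the surjectivity step, and it is a real one. You propose to lift $N \in \Gr_w(M)$ to $U := K_{LR}(N) \subset K_{LR}(M)$ and then argue that $\dimv K_{LR}(N) = (v_C, w)$ for the component $C$ containing $N$. But this identity is \emph{false} for non-generic $N$: by the characterization of Nakajima's strata recalled in section~\ref{ss:graded-quiver-varieties}, $\dimv K_{LR}(N)=(v,w)$ exactly when $N$ lies in the stratum $\cm_0^{bs}(v,w)$, and for $N$ in a deeper stratum of $C$ one has $\dimv K_{LR}(N) = (v',w)$ with $v' < v_C$, and $v'$ need not lie in $\cv_w(M)$ at all. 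No semicontinuity argument will repair this, since what you need is an equality of dimension vectors, not an inequality. Indeed Theorem~\ref{thm:fibre-of-piGr} shows the fibre over such an $N$ in $\Gr_{(v_C,w)}(K_{LR}(M))$ is a positive-dimensional Grassmannian of submodules of $K_{LR}(M)/K_{LR}(N)$, not the single point $K_{LR}(N)$ — so your candidate preimage simply is not in the source.

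The paper's surjectivity argument is structurally different and does not construct explicit preimages for non-generic points. Via Lemma~\ref{lemma:isom-on-bistables}(a), restriction gives an isomorphism $\cm^{bs}(v_C,w)^{\Gr} \iso \cm_0^{bs}(v_C,w)^{\Gr}$, the latter being dense open in $C$ by Lemma~\ref{lemma:definition-vc}. Since $\Gr^{bs}_{(v_C,w)}(K_{LR}(M))$ is projective, its image under $\pi$ is a closed subset of $\Gr_w(M)$ containing a dense open subset of $C$, hence contains all of $C$. Surjectivity of $\pi^{bs}$ onto each component, and a fortiori of $\pi_{\Gr}$, follows. If you want to salvage your ``explicit preimage'' approach, note that $K_{LR}(N)$ \emph{is} the correct lift precisely for generic $N$, and you then still need the density-plus-properness argument to cover the rest of $C$; at that point you have essentially reproduced the paper's proof.
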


We determine the fibres of the map $\pi_{\Gr}$ in Theorem \ref{thm:fibre-of-piGr}. 
To make sure that the generic fibre is reduced to a point, we need
to shrink the domain of $\pi_{\Gr}$. We do this as follows:
An $\cR$-module is {\em bistable} if it is isomorphic
to the intermediate extension of some $\cs$-module. For a dimension
vector $(v,w)$ of $\cR$, denote by $\Gr^{bs}_{(v,w)}(K_{LR}(M))$ 
the {\em bistable Grassmannian}, i.e. the closure of the set of points 
corresponding to bistable submodules. 
In analogy with Remark~7.8 of [loc. cit.], we conjecture
that the bistable Grassmannian actually equals the whole Grassmannian.
The following result is modeled on Corollary~7.7
of \cite{CerulliFeiginReineke12a} with the bistable Grassmannians
playing the role of the sets $\overline{S_{[N]}}$ in [loc. cit.]. 

\begin{theorem}[Theorem~\ref{thm:desingularization}]
\label{thm:intro-desingularization} 
Suppose that $K_{LR}(M)$ is rigid.
The map
\[
\pi^{bs}: \coprod_{v\in \cv_w(M)}\Gr^{bs}_{(v,w)}(K_{LR}(M)) \to 
\Gr_{w}(M)
\]
taking $U\subset K_{LR}(M)$ to $\res(U) \subset M$ is a desingularization
(a proper, surjective morphism with smooth domain which induces
an isomorphism between dense open subsets).
\end{theorem}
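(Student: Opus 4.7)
The plan is to bootstrap from Theorem~\ref{thm:intro-pre-desingularization}, which already provides properness, surjectivity, and smoothness of the larger map $\pi_{\Gr}$, and to supply the remaining birationality property---that $\pi^{bs}$ restricts to an isomorphism over a dense open subset of $\Gr_w(M)$.

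First I would show that for each $v\in\cv_w(M)$, the bistable Grassmannian $\Gr^{bs}_{(v,w)}(K_{LR}(M))$ is a union of irreducible components of $\Gr_{(v,w)}(K_{LR}(M))$. Rigidity of $K_{LR}(M)$ makes the latter smooth by the standard tangent-space computation for quiver Grassmannians, so smoothness of the domain of $\pi^{bs}$ would follow. Properness of $\pi^{bs}$ then drops out as the restriction of a proper morphism to a closed subvariety, and surjectivity will follow from properness combined with density of the image.

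Next, to obtain generic injectivity, I would exhibit a rational inverse on each irreducible component $C$ of $\Gr_w(M)$: send a generic point $N\in C$ to the intermediate Kan extension $K_{LR}(N)\subset K_{LR}(M)$, which is bistable by definition and restricts to $N$ under $\res$. The dimension vector of $K_{LR}(N)$ coincides with $v_C$ by the very definition of $v_C$ in Lemma~\ref{lemma:definition-vc}, which is set up via Nakajima's stratification precisely so as to record the graded dimension of the intermediate extension over a generic point of $C$. The assignment $N\mapsto K_{LR}(N)$ is a morphism on a dense open subset of $C$ and supplies the required section of $\pi^{bs}$ there, inverse to $\res$.

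The main obstacle is to verify simultaneously that $\Gr^{bs}_{(v,w)}(K_{LR}(M))$ is a union of components and that $N\mapsto K_{LR}(N)$ varies algebraically over a dense open. Both reduce to controlling the behaviour of intermediate Kan extensions in families, and this in turn rests on the local structure of Nakajima's stratification of $\cm_0(d)$: over the stratum containing the isomorphism class of $M$, the fibres of $\pi$ should be transversal, and bistability should be an open condition, cut out by the injectivity of the canonical map $K_{LR}(N)\to K_{LR}(M)$ together with the stability used to define $K_{LR}$. Once this local picture is secured, passing to closures yields the union-of-components statement, and the section above exhibits $\pi^{bs}$ as an isomorphism over a dense open in $\Gr_w(M)$, completing the proof.
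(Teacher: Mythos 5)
Your plan follows the same route as the paper: the map is proper because its domain is projective, smoothness is inherited from $\Gr_{(v,w)}(K_{LR}(M))$ via rigidity, and generic birationality is supplied by the inverse $N\mapsto K_{LR}(N)$ (which, since $N\subset M$ with $M=\res K_{LR}(M)$, coincides with the submodule of $K_{LR}(M)$ generated by $\bigoplus_{x\in\cs_0}N(x)$, the formula the paper uses). The one point you leave as an ``obstacle'' is handled more cleanly in the paper: since $K_{LR}(M)$ is stable, every submodule of it is automatically stable, so bistability amounts to costability, i.e.\ to $N$ being generated by $\bigoplus_{x\in\cs_0}N(x)$, which is an open rank condition on $\Gr_{(v,w)}(K_{LR}(M))$; openness together with smoothness and equidimensionality of $\Gr_{(v,w)}(K_{LR}(M))$ immediately gives that $\Gr^{bs}_{(v,w)}$ is a union of irreducible components, and the inverse $L\mapsto$ (submodule generated by the $L(x)$) is visibly a morphism of varieties, so no separate verification that $N\mapsto K_{LR}(N)$ ``varies algebraically'' is needed. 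Your suggestion that openness is ``cut out by injectivity of $K_{LR}(N)\to K_{LR}(M)$'' is not quite the right criterion (that map is always injective); replace it by the costability rank condition and the argument closes.
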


We will give sufficient conditions for $K_{LR}(M)$ to be rigid
(Lemmas~\ref{lemma:necessary-condition-rigidity} and \ref{lemma:Auslander-category})
and show by an example that this is not always the case 
(section~\ref{ss:non-rigid-inter-ext}). Nevertheless,
as a consequence of the above theorems, we will
obtain desingularizations for all modules
over the repetitive algebra of an iterated tilted algebra of
Dynkin type (Corollary~\ref{cor:desing-quiver-grassmannian}).
We will show that this covers in particular all the cases considered in 
\cite{CerulliFeiginReineke12a} and yields a natural interpretation
for the algebra $\ch_Q$ of [loc. cit.]
(cf. section~\ref{ss:link-to-CFRs-desingularization}).


The paper is organized as follows. In section~\ref{s:intermediate-extensions}, 
we introduce the intermediate extension $F_{\lambda \rho}$ 
associated to a localization functor between abelian categories
$F:\ca \to\cb$ which admits a right and a left adjoint.
In Lemma~\ref{lemma:necessary-condition-rigidity}, 
we give sufficient conditions for an object in the image of 
$F_{ \lambda \rho}$ to be rigid. In section~\ref{ss:Auslander-category},
we examine the particular case where $F$ is the 
restriction 
\[
\mod(\mod(\cp)) \to \mod(\cp)
\]
along the Yoneda embedding $\cp \to \mod(\cp)$ from a coherent
category $\cp$ to its category of finitely presented modules.

In section~\ref{s:desingularization}, we recall the definition of the regular and the singular 
Nakajima categories $\cR$ and $\cs$ introduced in \cite{LeclercPlamondon12}
(cf.~also section~2 of \cite{KellerScherotzke13a}) and explain how they relate to 
Nakajima's graded quiver varieties. In section~\ref{ss:configurations}, for
certain subsets $\sigma^{-1}(C)$ of the set of vertices of $\cs$, we
consider the quotients $\cs_C$ and $\cR_C$ obtained by factoring
out the ideal generated by the identities of the vertices outside $\sigma^{-1}(C)$.
We will later need the extra generality afforded by a suitable choice of $C$
in order to recover the results of Cerulli--Feigin--Reineke \cite{CerulliFeiginReineke12a}.
In section~~\ref{ss:proof-of-desingularization},  we state and prove our main 
Theorems.

In section~\ref{ss:piecewise-hereditary}, we show that
for a suitable choice of $C$, the category $\cs_C$ is equivalent to the category
of indecomposable projectives over the repetitive algebra $\hat{A}$ 
of an iterated tilted algebra $A$ of Dynkin type and that the category 
$\cR_C$ is equivalent to the category of indecomposable representations of 
$\hat{A}$ (Proposition~\ref{prop:mesh-category-isomorphisms}).
By Lemma~\ref{lemma:Auslander-category}, we know that the intermediate 
extension of a finite-dimensional $\hat{A}$-module is always rigid. Thus, we obtain 
a desingularization map for any finite-dimensional module over the 
repetitive algebra $\hat{A}$ of $A$. We then specialize $A$ to the
path algebra $kQ$ of a Dynkin quiver $Q$. The category of
finite-dimensional $kQ$-modules appears naturally as a full subcategory of 
$\mod(\hat{kQ})$. In section~\ref{ss:link-to-CFRs-desingularization}, 
we show that the intermediate extension $K_{LR}$
restricted to the category of finite-dimensional $kQ$-modules specializes
to the functor $\Lambda$ constructed by Cerulli--Feigin--Reineke 
\cite{CerulliFeiginReineke12a} and that our desingularization specializes 
to theirs. In section~\ref{s:example-tilted-D4}, we illustrate the desingularization
theorem using a module over a tilted algebra of type~$D_4$.

\subsection*{Acknowledgments} 
This article was conceived during the  cluster algebra program at the 
MSRI in fall 2012. The authors are  grateful to the MSRI for financial support 
and ideal working conditions.
They thank Giovanni Cerulli Irelli, Bernard Leclerc,
Pierre-Guy Plamondon and Markus Reineke for stimulating conversations.

\section{Intermediate Kan extensions}
\label{s:intermediate-extensions}

\subsection{Intermediate Kan extensions and rigidity}
\label{ss:rigidity-of-intermediate-extensions} We first study the
properties of the intermediate extension in the framework of abelian
categories: Let $\ca$ and $\cb$ be abelian categories. 
Let $F: \ca \to \cb$ be a
localization functor, i.e. $F$ is exact and induces an
equivalence 
\[
\ca/\ker(F) \iso \cb \ko
\]
where $\ca/\ker(F)$ is the localization of $\ca$ with respect
to the Serre subcategory $\ker(F)$ in the sense of \cite{Gabriel62}.
We assume that $F$ admits both a right adjoint $F_\rho$ and a
left adjoint $F_\lambda$ so that we have three adjoint functors
\[
\xymatrix{
\ca \ar[d]|-*+{{\scriptstyle F}} \\
\cb. \ar@<2ex>[u]^{F_\lambda} \ar@<-2ex>[u]_{F_\rho}
}
\]
Notice that $F_\lambda$ and $F_\rho$ are both fully faithful
(since $F$ is a localization), that $F_\lambda$ is right exact
and preserves projectivity and that $F_\rho$ is left exact
and preserves injectivity. Denote the adjunction morphisms
by
\[
\phi: F_\lambda F \to \id_\ca \ko
\psi: \id_\cb \to F F_\lambda \ko
\eta: F F_\rho \to \id_\cb \ko
\eps: \id_\ca \to F_\rho F.
\]

\begin{lemma} \label{lemma:characterization-of-stables} Let $M$ be an object of $\ca$.
\begin{itemize} 
\item[a)] The adjunction morphism $M \to F_\rho F M$
is mono if and only if the group $\Hom(N,M)$ vanishes for each object $N$ of $\ker(F)$.
\item[b)] The adjunction morphism $M\to F_\rho FM$ is invertible
iff we have $\Hom(N,M)=0=\Ext^1(N,M)$ for each object $N$ of $\ker(F)$.
\end{itemize}
\end{lemma}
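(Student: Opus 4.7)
The plan is to analyze the kernel and cokernel of $\eps_M : M \to F_\rho F M$ by applying $F$ and exploiting the fact that $F$ is exact while $F\eps_M$ is an isomorphism. Indeed, since $F$ is a localization, $F_\rho$ is fully faithful and hence the counit $\eta : F F_\rho \to \id_\cb$ is an isomorphism; combined with the triangle identity $\eta F \circ F \eps = \id_F$, this forces $F\eps_M$ to be inverse to $\eta_{FM}$, hence an iso. Applying the exact functor $F$ to the canonical four-term sequence associated with $\eps_M$ therefore shows that both $\ker(\eps_M)$ and $\coker(\eps_M)$ lie in $\ker(F)$. This will be the technical backbone shared by both parts.

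For part (a), one implication is immediate: if $\Hom(N,M)=0$ for all $N\in\ker(F)$, then the inclusion $\ker(\eps_M)\hookrightarrow M$ vanishes and $\eps_M$ is mono. For the converse, I would take $N\in\ker(F)$ and $f : N \to M$, and appeal to the naturality square
\[
\eps_M \circ f = F_\rho F(f) \circ \eps_N.
\]
Since $F_\rho$ is additive and $FN=0$, we have $F_\rho FN = 0$, so $\eps_N = 0$ and hence $\eps_M\circ f = 0$; as $\eps_M$ is mono, $f=0$.

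For part (b), the forward direction is the quickest: if $\eps_M$ is an isomorphism, then part (a) gives $\Hom(N,M)=0$. Given an extension $0 \to M \to E \to N \to 0$ with $N\in\ker(F)$, I would apply the left-exact functor $F_\rho F$ and use $F_\rho F N = 0$ to conclude $F_\rho F M \iso F_\rho F E$. Composing $\eps_E$ with $(F_\rho F i)^{-1}$ and $\eps_M^{-1}$ yields, via naturality of $\eps$, a retraction of the inclusion $i : M \to E$, so the extension splits. For the converse, part (a) already gives that $\eps_M$ is mono. Let $C = \coker(\eps_M)$, which lies in $\ker(F)$ by the discussion above. Since $\Ext^1(C,M)=0$, the sequence $0 \to M \to F_\rho F M \to C \to 0$ splits, producing a monomorphism $C \hookrightarrow F_\rho F M$. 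But the adjunction identifies $\Hom(C,F_\rho F M) = \Hom(FC,FM) = 0$, so this mono is zero, forcing $C=0$.

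The argument is essentially formal manipulation of the adjunction, the triangle identities, and exactness; the only point to be careful about is keeping track of left- versus right-exactness of $F_\rho$ and $F_\lambda$ and remembering that $F\eps$ is an iso (not merely a split mono), which is what lets us pass information back and forth between $\ca$ and $\cb$. I do not anticipate a serious obstacle.
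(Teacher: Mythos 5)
Your proof is correct. The paper does not actually give a proof of this lemma: part (a) is left as an exercise, and part (b) is attributed to Lemme~1, p.~370 of Gabriel's \emph{Des cat\'egories ab\'eliennes}. Your argument supplies a complete, self-contained proof of both parts. The backbone (apply the exact functor $F$ to the four-term sequence for $\eps_M$, use that $F\eps_M$ is an isomorphism because $\eta$ is invertible by full faithfulness of $F_\rho$ together with the triangle identity, and conclude $\ker(\eps_M),\,\coker(\eps_M)\in\ker(F)$) is the standard route and is surely what underlies Gabriel's original proof. The individual steps check out: for (a), naturality of $\eps$ plus $F_\rho F N=0$ gives $\eps_N=0$; for (b)$\Rightarrow$, left exactness of $F_\rho F$ turns $F_\rho F i$ into an isomorphism, yielding the retraction; for (b)$\Leftarrow$, the splitting gives a section $C\to F_\rho F M$ which must vanish by the adjunction $\Hom_\ca(C,F_\rho FM)\cong\Hom_\cb(FC,FM)=0$, forcing $C=0$. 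No gaps.
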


We leave the proof of part a) as an exercise for the reader. Part b) is
the characterization of the image of the adjoint
of a localization functor given in Lemme~1, page~370 of \cite{Gabriel62}.
We call an object $M$ {\em stable} if it satisfies the conditions part a). Dually,
it is {\em co-stable} if it satisfies the dual conditions: the adjunction morphism 
$F_\lambda F M \to M$ is epi or, equivalently, we have $\Hom(M,N)=0$
for each object $N$ of $\ker(F)$. 

\begin{lemma} \label{lemma:commutative-square} 
The following square is commutative
\[
\xymatrix{
F_\lambda F F_\rho \ar[r]^-\sim_-{F_\lambda \eta} \ar[d]_{\phi F_\rho} & 
F_\lambda \ar[d]^{\eps F_\lambda}  \\
F_\rho \ar[r]^-\sim_-{F_\rho \psi} & F_\rho F F_\lambda
}
\]
\end{lemma}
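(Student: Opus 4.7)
The plan is to reduce the commutativity of the square to the equality of two natural transformations with common target $F F_\lambda$ by transposing under the adjunction $F \dashv F_\rho$. Since $F$ is a localization admitting both adjoints, the functors $F_\lambda$ and $F_\rho$ are fully faithful; equivalently, the unit $\psi \colon \id_\cb \to F F_\lambda$ and the counit $\eta \colon F F_\rho \to \id_\cb$ are natural isomorphisms, which is already recorded in the statement by the $\sim$ decorations on the horizontal arrows. The adjunction bijection $\Hom_\ca(X, F_\rho Y) \iso \Hom_\cb(F X, Y)$ sends a morphism $g \colon F_\lambda F F_\rho \to F_\rho F F_\lambda$ to its mate $\tilde g = \eta F F_\lambda \circ F g \colon F F_\lambda F F_\rho \to F F_\lambda$, and since this is a bijection it suffices to show that the two compositions in the square have equal mates.

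Computing the mate of the top-right composition $\eps F_\lambda \circ F_\lambda \eta$ produces $\eta F F_\lambda \circ F \eps F_\lambda \circ F F_\lambda \eta$; the $F \dashv F_\rho$ triangle identity $\eta F \circ F \eps = \id_F$, whiskered on the right with $F_\lambda$, collapses the first two factors to the identity, leaving $F F_\lambda \eta$. For the left-bottom composition $F_\rho \psi \circ \phi F_\rho$ the mate is $\eta F F_\lambda \circ F F_\rho \psi \circ F \phi F_\rho$. Naturality of $\eta$ applied to $\psi$ rewrites the first two factors as $\psi \circ \eta$, and the $F_\lambda \dashv F$ triangle identity $F \phi \circ \psi F = \id_F$, whiskered on the right with $F_\rho$, yields $F \phi F_\rho = (\psi F F_\rho)^{-1}$ since $\psi$ is invertible; thus the mate becomes $\psi \circ \eta \circ (\psi F F_\rho)^{-1}$. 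A final application of naturality of $\psi$, this time to $\eta$, rewrites $\psi \circ \eta$ as $F F_\lambda \eta \circ \psi F F_\rho$, and the trailing inverse cancels, leaving $F F_\lambda \eta$ as well. The two mates therefore coincide and the square commutes.

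The main obstacle is purely notational: keeping careful track of which object each unit or counit is instantiated at, and getting the left/right whiskering conventions straight. The content of the lemma is the standard mate-compatibility carried by a chain $F_\lambda \dashv F \dashv F_\rho$ of adjunctions, and no ingredient beyond the two triangle identities together with the naturality of $\psi$ and $\eta$ is required.
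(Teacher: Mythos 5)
Your proof is correct, and it takes a genuinely different route from the paper's. The paper reduces to checking commutativity after pre-composing with $F$, justified by essential surjectivity of $F$, and then builds a $1 \times 3$ strip of squares whose outer panels commute by naturality of $\phi$ and $\eps$ and whose rows collapse, via the whiskered triangle identities, to $\phi$ on top and $\eps$ on the bottom; cancelling the (invertible) outer panels yields the central square. You instead post-compose: you transpose both legs of the square under the adjunction bijection $\Hom_\ca(X, F_\rho Y) \iso \Hom_\cb(FX, Y)$, which reflects equality, and then show by direct computation that each mate equals $F F_\lambda \eta$. The raw ingredients --- the two triangle identities, naturality of $\eta$ and $\psi$, and the invertibility of $\psi$ coming from full faithfulness of $F_\lambda$ --- are the same in both arguments; what your approach buys is a slight economy, bypassing the appeal to essential surjectivity of $F$ and replacing the diagram chase with two short mate computations.
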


\begin{proof} Since $F:\ca\to\cb$ is essentially surjective, it suffices
to check the commutativity after pre-composing with $F$. Consider
the diagram
\[
\xymatrix{
F_\lambda F \ar[r]^-\sim_-{F_\lambda F \eps} \ar[d]_\phi &
F_\lambda F F_\rho F \ar[r]^-\sim_{F_\lambda\eta F} \ar[d]_{\phi F_\rho F} & 
F_\lambda F \ar[r]^\phi \ar[d]^{\eps F_\lambda F} &
\id_\ca \ar[d]^\eps \\
\id_\ca  \ar[r]_-\eps & 
F_\rho F \ar[r]^\sim_-{F_\rho \psi F} &
F_\rho F F_\lambda F \ar[r]^-\sim_-{F_\rho F \phi} &
F_\rho F.
}
\]
Here the composition $(F_\lambda \eta F)(F_\lambda F \eps)$ is the
identity and so is the composition $(F_\rho F \phi)(F_\rho \psi F)$. Thus, the
large rectangle is commutative. The leftmost square is commutative because
$\phi: F_\lambda F \to \id_\ca$ is a natural transformation and the rightmost
square is commutative because so is $\eps: \id_\ca \to F_\rho F$. It follows
that the central square is commutative as claimed.
\end{proof}

By Lemma~\ref{lemma:commutative-square}, we have a canonical morphism
\[
\can : F_\lambda \to F_\rho.
\]
We define the {\em intermediate extension $F_{\lambda \rho}$} to be its image.
Notice that, for each object $M$ of $\cb$, the object $F_{\lambda\rho}(M)$
is both stable (as a subobject of $F_\rho M$) and co-stable (as a quotient
of $F_\lambda M$). We have canonical morphisms
\[
\xymatrix{F_\lambda \ar[r]^\pi & F_{\lambda \rho} \ar[r]^\iota & F_\rho}
\]
and their images under $F$ are invertible (since $F \iota$ is mono,
$F\pi$ is epi and their composition $F(\can)$ is invertible).
One deduces that $F_{\lambda\rho}$ induces an equivalence
from $\cb$ onto the full subcategory of $\ca$ formed by the
objects which are both stable and co-stable.

\begin{lemma} \label{lemma:necessary-condition-rigidity}
\begin{itemize} 
\item[a)] For all objects $L$ of $\ca$ and $M$ of $\cb$, we have
canonical injections
\[
\Ext^1_\ca(L, F_\rho M) \to \Ext^1_\cb(FL, M) \mbox{ and }
\Ext^1_\ca(F_\lambda M, L) \to \Ext^1_\cb(M, FL).
\]
\item[b)] If $L$ is rigid in $\cb$, then $F_\lambda(L)$, $F_\rho(L)$ and
$F_{\lambda\rho}(L)$ are rigid in $\ca$.
\item[c)] Conversely, if $F_{\lambda\rho}(L)$ is rigid in $\ca$ and 
moreover the group $\Ext^2_\ca(U,U)$ vanishes, where $U$ is the cokernel of 
$F_{\lambda\rho}(L) \to F_\rho(L)$, then $L$ is rigid in $\cb$.
\end{itemize}
\end{lemma}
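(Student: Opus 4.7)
The plan is to treat the three parts in sequence, since each builds on the previous.

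For part (a), I construct the map $\Ext^1_\ca(L, F_\rho M) \to \Ext^1_\cb(FL, M)$ by applying the exact functor $F$ to an extension $0 \to F_\rho M \to E \to L \to 0$ and using the isomorphism $FF_\rho M \iso M$. To see injectivity, assume the image splits via $s: FL \to FE$; the adjunction $\Hom_\cb(FL, FE) = \Hom_\ca(L, F_\rho FE)$ produces $\tilde s: L \to F_\rho FE$, and since $F_\rho M$ is stable (no morphism from $\ker F$ enters it) and the kernel/cokernel of $\eps_E: E \to F_\rho FE$ lies in $\ker F$, a short diagram chase lifts $\tilde s$ to a section of $E \to L$. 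The second injection follows dually using $F_\lambda$ and costability in place of stability.

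For part (b), the cases $F_\rho L$ and $F_\lambda L$ are immediate from part (a): $\Ext^1_\ca(F_\rho L, F_\rho L) \hookrightarrow \Ext^1_\cb(L, L) = 0$ and $\Ext^1_\ca(F_\lambda L, F_\lambda L) \hookrightarrow \Ext^1_\cb(L, L) = 0$. For $F_{\lambda\rho} L$, apply $\Hom_\ca(F_{\lambda\rho} L, -)$ to $0 \to F_{\lambda\rho} L \to F_\rho L \to U \to 0$, where $U \in \ker F$. Since $F_{\lambda\rho} L$ is costable, $\Hom(F_{\lambda\rho} L, U) = 0$, while $\Ext^1(F_{\lambda\rho} L, F_\rho L) \hookrightarrow \Ext^1_\cb(L, L) = 0$ by part (a); the relevant piece of the long exact sequence then forces $\Ext^1_\ca(F_{\lambda\rho} L, F_{\lambda\rho} L) = 0$.

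Part (c) is the heart of the lemma. I first upgrade the part (a) injection to an isomorphism $\Ext^1_\cb(L, L) \cong \Ext^1_\ca(F_{\lambda\rho} L, F_\rho L)$ by constructing a pullback inverse: given $0 \to L \to E \to L \to 0$ in $\cb$, apply $F_\rho$ and let $V \subseteq F_\rho L$ be the image of $F_\rho E \to F_\rho L$; exactness of $F$ yields $FV = L$, so $F_{\lambda\rho} L \subseteq V$ (as the smallest subobject of $F_\rho L$ on which $F$ recovers $L$), and pulling $F_\rho E$ back along $F_{\lambda\rho} L \hookrightarrow V$ gives an extension of $F_{\lambda\rho} L$ by $F_\rho L$ whose image under $F$ is the original. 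It then suffices to show $\Ext^1_\ca(F_{\lambda\rho} L, F_\rho L) = 0$. Applying $\Hom(F_{\lambda\rho} L, -)$ to $0 \to F_{\lambda\rho} L \to F_\rho L \to U \to 0$ and invoking costability together with the hypothesis $\Ext^1(F_{\lambda\rho} L, F_{\lambda\rho} L) = 0$ produces an injection $\Ext^1(F_{\lambda\rho} L, F_\rho L) \hookrightarrow \Ext^1(F_{\lambda\rho} L, U)$ whose cokernel lies in $\Ext^2(F_{\lambda\rho} L, F_{\lambda\rho} L)$; applying $\Hom(-, U)$ and using $\Ext^2(U, U) = 0$ exhibits $\Ext^1(F_{\lambda\rho} L, U)$ as a quotient of $\Ext^1(F_\rho L, U)$ by $\Ext^1(U, U)$. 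The main obstacle is the final diagram chase verifying that these two pieces of information actually annihilate $\Ext^1_\ca(F_{\lambda\rho} L, F_\rho L)$; I expect this to proceed by exploiting the full four-term exact sequence $0 \to K \to F_\lambda L \to F_\rho L \to U \to 0$, whose class in $\Ext^2(U, K)$ mediates a Yoneda product through which the hypothesis $\Ext^2(U, U) = 0$ finally eliminates the remaining summand.
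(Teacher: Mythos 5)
Your part (b) matches the paper's argument exactly, and your part (c) set-up (pullback along $F_{\lambda\rho}L\hookrightarrow V\subseteq F_\rho L$ to realize $\Ext^1_\cb(L,L)$ inside $\Ext^1_\ca(F_{\lambda\rho}L,F_\rho L)$) is a reasonable and slightly more symmetric variant of the paper's construction, where instead the paper produces a non-split extension of $\cok(F_\rho(i))$ by $F_\rho(L)$ and then maps it into $\Ext^1(F_{\lambda\rho}L,F_\rho L)$. But there are two places where your argument, as written, does not go through.

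In part (a) your mechanism for injectivity is shaky. From a section $s\colon FL\to FE$, the adjunction $F\dashv F_\rho$ gives $\tilde s\colon L\to F_\rho FE$, but this lands in $F_\rho FE$, not in $E$, and $\eps_E\colon E\to F_\rho FE$ is in general neither mono nor epi, so there is no direct way to ``lift $\tilde s$ to a section of $p\colon E\to L$'' by a short chase; the kernel and cokernel of $\eps_E$ being in $\ker F$ does not by itself let you transport a map $L\to F_\rho FE$ to a map $L\to E$ (you would additionally have to control $\ker\eps_L$, and the discrepancy $p\circ(\text{lift})-\mathrm{id}_L$ a priori only factors through $\ker\eps_L$). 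The paper sidesteps this entirely by working with retractions instead of sections: a splitting of $F$ of the sequence gives a retraction $r\colon FE\to M$, and the adjunction $\Hom_\cb(FE,M)\cong\Hom_\ca(E,F_\rho M)$ lands exactly where you need it; the triangle identity $F_\rho(\eta M)\circ\eps_{F_\rho M}=\mathrm{id}$ then shows the resulting map $E\to F_\rho M$ is a retraction of $i$. Switching your argument from sections to retractions would repair this cheaply.

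Part (c) is the genuine gap, and you say so yourself: you reduce to showing $\Ext^1_\ca(F_{\lambda\rho}L,F_\rho L)=0$ and from there to a final diagram chase which you describe as the ``main obstacle'' and do not carry out. In particular you want $\Ext^1(F_{\lambda\rho}L,U)=0$; applying $\Hom(-,U)$ to $0\to F_{\lambda\rho}L\to F_\rho L\to U\to 0$ and using $\Ext^2(U,U)=0$ only gives that $\Ext^1(F_\rho L,U)\twoheadrightarrow\Ext^1(F_{\lambda\rho}L,U)$, so you are left needing $\Ext^1(F_\rho L,U)$ to vanish (or to map to zero), and neither stability of $F_\rho L$ nor part (a) controls $\Ext^1(F_\rho L,-)$ on $\ker F$. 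The paper's own proof of (c) argues by contraposition (a non-split extension in $\cb$ forces $\Ext^1(F_{\lambda\rho}L,F_{\lambda\rho}L)\neq 0$) and asserts the same vanishing $\Ext^1(F_{\lambda\rho}L,U)=0$ from $\Ext^2(U,U)=0$ at the analogous spot, so you are in good company, but a proof that stops at ``I expect this to proceed\ldots'' is not a proof. You need either to exhibit the vanishing of $\Ext^1(F_\rho L,U)$ in this generality, or to find the precise Yoneda-product argument with the four-term sequence $0\to V\to F_\lambda L\to F_\rho L\to U\to 0$ that you allude to, and actually write it down.
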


\begin{proof} a) We define the image of the class of an exact sequence
\[
\xymatrix{ 0 \ar[r] & F_\rho M \ar[r]^-i & E \ar[r]^p & L \ar[r] & 0}
\]
to be the class of the sequence
\[
\xymatrix{ 0 \ar[r] & M \ar[r]^j & FE \ar[r]^{Fp} & FL \ar[r] & 0} \ko
\]
where $j=(Fi)(\eta M)^{-1}$.  Clearly, this yields a well-defined map
\[
\Ext^1_\ca(L, F_\rho M) \to \Ext^1_\cb(L,M).
\]
It is not hard to check that if $r: FE\to M$ is a retraction for $j$, then
$(F_\rho r)(\eta E)$ is a retraction for $i$. Thus, our map is injective. Dually,
one obtains the second injection.

b) By part a), we have the injection
\[
\Ext^1_\ca(F_\lambda L, F_\lambda L) \to \Ext^1_\cb(L, F F_\lambda L) =
\Ext^1_\cb(L,L)=0
\]
and similarly for $F_\rho L$. Now consider the exact sequence
\[
\xymatrix{ 0 \ar[r] & F_{\lambda \rho}(L) \ar[r] & F_\rho(L) \ar[r] & U \ar[r] & 0.}
\]
Here the object $U$ lies in $\ker(F)$. If we apply the functor 
$\Hom_\ca(F_{\lambda\rho}(L), ?)$ to the exact sequence, we obtain 
the exact sequence
\[
\Hom_\ca(F_{\lambda\rho}(L),U) \to 
\Ext^1_\ca(F_{\lambda\rho}(L), F_{\lambda\rho}(L)) \to 
\Ext^1_\ca(F_{\lambda\rho}(L), F_\rho(L)).
\]
Since $F_{\lambda\rho}(L)$ is co-stable and $U$ lies in $\ker(F)$, the
left-hand term vanishes. Since we have $F F_{\lambda\rho}(L)=L$ and
$L$ is rigid, the right hand term vanishes by part a). Thus, the
object $F_{\lambda\rho}(L)$ is rigid.

c) Let 
\[
\xymatrix{ 0 \ar[r] & L \ar[r]^i & E \ar[r] & L \ar[r] & 0}
\]
be a non split exact sequence in $\cb$. Then $F_\rho(i)$ is a non
split monomorphism of $\ca$ (since $F_\rho$ is fully faithful) and
so the sequence
\[
\xymatrix{ 0 \ar[r] & F_\rho(L) \ar[r]^{F_\rho(i)} & F_\rho(E) \ar[r] &
\cok(F_\rho(i)) \ar[r] & 0}
\]
is non split in $\ca$. The object $\cok(F_\rho(i))$ is stable since for
$N\in \ker(F)$, we have
\[
\Hom(N, F_\rho(E))=0=\Ext^1_\ca(N,F_\rho(L)).
\]
Moreover, the image $F \cok(F_\rho(i))$  is isomorphic to $L$ since $F$ is
exact. So we have an exact sequence
\[
0 \to F_{\lambda\rho}(L) \to \cok(F_\rho(i)) \to U \to 0
\]
with $U$ in $\ker(F)$. If we apply $\Hom(?, F_\rho(L))$ to this sequence, we
obtain an exact sequence
\[
0 \to \Ext^1(\cok(F_\rho(i)), F_\rho(L)) \to \Ext^1(F_{\lambda\rho}(L), F_\rho(L)).
\]
Since we have found a non zero element in $\Ext^1(\cok(F_\rho(i)), F_\rho(L))$,
we see that the right hand group does not vanish. We claim that
$\Ext^1(F_{\lambda\rho}(L), U)$ vanishes. Indeed, this follows from
our assumption  when we apply $\Ext^1(?, U)$ to the sequence
\[
0 \to F_{\lambda\rho}(L) \to F_\rho(L) \to U \to 0.
\]
Now we claim that we have an isomorphism
\[
\Ext^1(F_{\lambda\rho}(L), F_{\lambda\rho}(L)) \iso
\Ext^1(F_{\lambda\rho}(L), F_\rho(L)).
\]
Indeed this follows by applying $\Ext^1(F_{\lambda\rho}(L),?)$ to the
sequence
\[
0 \to F_{\lambda\rho}(L) \to F_\rho(L) \to U \to 0.
\]
We conclude that $\Ext^1(F_{\lambda\rho}(L), F_{\lambda\rho}(L))$ is
non zero as claimed.
\end{proof}

\subsection{The case of the Auslander category}
\label{ss:Auslander-category} We consider the special case of
the setup of section~\ref{ss:rigidity-of-intermediate-extensions} where
$\cb$ is a module category and $\ca=\mod(\cb)$ the Auslander
category.

Let $k$ be a field and
$\cp$ a skeletally small $k$-category (i.e.~its isomorphism classes
form a set). Let $\mod(\cp)$ be the category of finitely presented
$\cp$-modules, i.e.~of $k$-linear functors $M: \cp^{op} \to \Mod k$
admitting an exact sequence
\[
P_1 \to P_0 \to M \to 0 
\]
where $P_0$ and $P_1$ are finitely generated projective $\cp$-modules,
i.e.~direct factors of finite direct sums of representable $\cp$-modules
$P^{\wedge}=\cp(?,P)$, $P\in\cp$. 
Notice that the category $\mod(\cp)$ is still skeletally small. We
assume that $\mod(\cp)$ is abelian or, equivalently, that $\cp$ is
coherent, i.e. the kernel of any morphism between finitely generated
projective $\cp$-modules is finitely generated.  We have the Yoneda 
embedding $\cp \to \mod(\cp)$ taking 
$P$ to $P^\wedge$.
Let $\cb=\mod(\cp)$, $\ca=\mod(\cb)$ and let $F=\res:\ca\to\cb$ be the
restriction along the Yoneda embedding. Thus, we have functors
\[
\xymatrix{
\mod(\cp) \\
\cp \ar[u]^{\mbox{{\small Yoneda}}}
}
\quad
\xymatrix{
\mod(\mod(\cp)) \ar@{=}[r] \ar[d]|-*+{\res} & \ca \\
\mod(\cp) \ar@{=}[r] \ar@<3ex>[u]^{K_L} \ar@<-3ex>[u]_{K_R} & \cb,
}
\]
where $F_\lambda=K_L$ and $F_\rho=K_R$ are the left and
right Kan extensions adjoint to the restriction functor $F=\res$.
As in section~\ref{ss:rigidity-of-intermediate-extensions},
let $K_{LR}=F_{\lambda\rho}$ be the intermediate extension.

\begin{lemma} \label{lemma:Auslander-category}
\begin{itemize}
\item[a)] The functor $K_R: \ca\to\cb$ is isomorphic to the
Yoneda embedding 
\[
\mod(\cp) \to \mod(\mod(\cp)), \, M \mapsto M^\wedge=\Hom(?,M).
\]
In particular, for each injective $I$ of $\mod(\cp)$, the module
$K_R(I)$ is both projective and injective.
\item[b)] The canonical morphism $K_L(P) \to K_R(P)$ is invertible
for all finitely generated projective $\cp$-modules $P$.
\item[c)] Let $M$ be in $\mod(\cp)$ and
\[
\xymatrix{
0 \ar[r] & \Omega M \ar[r]^g & P_0 \ar[r]^f & M \ar[r] & 0
}
\]
be an exact sequence with finitely generated projective $P_0$. Then
the induced sequence
\[
\xymatrix{
0 \ar[r] & (\Omega M)^\wedge \ar[r]^-{g^\wedge} & P_0^\wedge \ar[r] &
K_{LR}(M) \ar[r] & 0}
\]
is a projective resolution of $K_{LR}(M)$. If $P_0$ is also injective,
then $K_{LR}(M)$ is rigid. 
\end{itemize}
\end{lemma}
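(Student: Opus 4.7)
My plan is to treat (a), (b), (c) in order, with (a) and (b) feeding directly into both assertions of (c). For (a), I would show that the Yoneda functor $M\mapsto M^\wedge$ is right adjoint to $\res\colon\mod(\mod(\cp))\to\mod(\cp)$ and then invoke uniqueness of right adjoints. The key observations are that $\res(N^\wedge)\cong N$ for $N\in\mod(\cp)$---evaluating at $P\in\cp$ and using the Yoneda lemma inside $\mod(\cp)$ gives $N^\wedge(P^\wedge)=\Hom_{\mod(\cp)}(P^\wedge,N)\cong N(P)$---and that for a representable $X=N^\wedge$ one has
\[
\Hom_\ca(N^\wedge,M^\wedge)\cong\Hom_\cb(N,M)\cong\Hom_\cb(\res N^\wedge,M).
\]
Both sides of the prospective adjunction are contravariantly left exact in $X$, so a standard five-lemma argument applied to a finite presentation $P_1\to P_0\to X\to 0$ by representables extends the isomorphism to arbitrary $X\in\ca$. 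The final assertion is then immediate: $K_R(I)\cong I^\wedge$ is representable, hence projective, and $K_R$ preserves injectivity because its left adjoint $\res$ is exact.

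For (b), I would compute directly from the adjunction and the Yoneda lemma: for $P\in\cp$ and $A\in\ca$,
\[
\Hom_\ca(K_L(P^\wedge),A)\cong\Hom_\cb(P^\wedge,\res A)\cong(\res A)(P)\cong A(P^\wedge)\cong\Hom_\ca((P^\wedge)^\wedge,A),
\]
so by Yoneda $K_L(P^\wedge)\cong(P^\wedge)^\wedge\cong K_R(P^\wedge)$. A diagram chase through the commutative square of Lemma~\ref{lemma:commutative-square} then identifies the canonical morphism $\can_{P^\wedge}$ with this Yoneda isomorphism. Additivity and passage to direct summands extend the statement to arbitrary finitely generated projective $\cp$-modules.

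For (c), I would first show that $K_{LR}(M)=\im(P_0^\wedge\to M^\wedge)$: right exactness of $K_L$ applied to $P_0\twoheadrightarrow M$ produces a surjection $K_L(P_0)\twoheadrightarrow K_L(M)$, and by (b) together with naturality of $\can$, the composite $P_0^\wedge=K_L(P_0)\twoheadrightarrow K_L(M)\to K_R(M)\cong M^\wedge$ equals the Yoneda image of $P_0\to M$; since the first arrow is epi, its image coincides with $K_{LR}(M)=\im(\can_M)$. The kernel of $P_0^\wedge\to M^\wedge$ is $(\Omega M)^\wedge$ by left exactness of the contravariant Yoneda embedding applied to $0\to\Omega M\to P_0\to M$, and splicing yields the short exact sequence
\[
0\to(\Omega M)^\wedge\to P_0^\wedge\to K_{LR}(M)\to 0,
\]
which is a projective resolution since its first two terms are representable, hence projective in $\ca$. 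When $P_0$ is in addition injective, (a) gives that $P_0^\wedge\cong K_R(P_0)$ is injective in $\ca$, and the above resolution forces $K_{LR}(M)$ to have projective dimension at most one. Applying $\Hom_\ca(K_{LR}(M),-)$ to the short exact sequence then yields the fragment
\[
\Ext^1_\ca(K_{LR}(M),P_0^\wedge)\to\Ext^1_\ca(K_{LR}(M),K_{LR}(M))\to\Ext^2_\ca(K_{LR}(M),(\Omega M)^\wedge)
\]
of the long exact Ext sequence; its left term vanishes by injectivity of $P_0^\wedge$ and its right term by the projective-dimension bound, so the middle vanishes and $K_{LR}(M)$ is rigid. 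The step I expect to require the most care is the diagram chase in (b) identifying $\can_{P^\wedge}$ with the Yoneda isomorphism and the subsequent naturality argument in (c); the rest is driven rather mechanically by Yoneda, left and right exactness, and the adjunction.
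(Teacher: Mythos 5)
Your proof is correct and follows essentially the same route as the paper's. The only differences are presentational: in (a) you establish that the Yoneda embedding is a right adjoint to $\res$ and invoke uniqueness of adjoints, whereas the paper directly computes $(K_R M)(L)=\Hom_\ca(L^\wedge,K_R M)=\Hom_\cb(L,M)=M^\wedge(L)$; and in (c) you derive rigidity from the fragment of the long exact $\Ext$-sequence using injectivity of $P_0^\wedge$ and the projective-dimension bound, while the paper makes the equivalent concrete lift-and-extend cocycle argument using projectivity of $(\Omega M)^\wedge$ and injectivity of $P_0^\wedge$. Your extra care in (b) about identifying $\can_{P^\wedge}$ with the Yoneda isomorphism is a reasonable point of emphasis; the paper is terser here, simply noting that both Kan extensions produce $P^\wedge$.
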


We refer to section~\ref{ss:non-rigid-inter-ext} for an example
where $K_{LR}(M)$ is not rigid.

\begin{proof} a) For $L$ in $\mod(\cp)$, we have functorial isomorphisms
\[
(K_R M)(L) = \Hom(L^\wedge, K_R M) = \Hom(\res(L^\wedge), M) =
\Hom(L,M) = M^\wedge(L).
\]

b) We have $K_L(P)=P^\wedge$ and by a), we have $P^\wedge=K_R(P)$.

c) We have a commutative diagram with exact rows
\[
\xymatrix{
            &                                          & 
                                     K_L(P_0) \ar@{=}[d] \ar[r] & K_L(M) \ar[r] \ar[d] & 0 \\
0 \ar[r] & (\Omega M)^\wedge \ar[r] & P_0^\wedge \ar[r]^-{f^\wedge} & K_R(M)=M^\wedge & 
}
\]
Thus, the image of $f^\wedge$ is $K_{LR}(M)$. Now suppose that $P_0$
is injective. Let $h: (\Omega M)^\wedge \to K_{LR}(M)$ represent
an element of $\Ext^1(K_{LR}(M),K_{LR}(M))$. 
\[
\xymatrix{
0 \ar[r] & (\Omega M)^\wedge \ar@{-->}[dl]_l \ar[r]^-{g^\wedge} \ar[d]^h  & 
              P_0^\wedge \ar[r]     & K_{LR}(M) \ar[r] & 0 \\
P_0^\wedge \ar[r] & K_{LR}(M) }
\]
Since $(\Omega M)^\wedge$
is projective, the morphism $h$ lifts along $P_0^\wedge \to K_{LR}(M)$
to a morphism $l: (\Omega M)^\wedge \to P_0^\wedge$. Since 
$P_0^\wedge$ is injective, the morphism $l$ extends along
$g^\wedge: (\Omega M)^\wedge \to P_0^\wedge$. Thus, the
morphism $h$ extends along $g^\wedge$ and its class
in $\Ext^1(K_{LR}(M),K_{LR}(M))$ vanishes.
\end{proof}

\section{Desingularization of quiver Grassmannians}
\label{s:desingularization}

\subsection{Repetition quivers and the derived category}
\label{ss:repetition-quivers-and-Happels-theorem}
Let $Q$ be a quiver. We write $Q_0$ for its set of vertices and
$Q_1$ for its set of arrows. We assume that $Q$ is {\em finite},
i.e. both $Q_0$ and $Q_1$ are finite, and
{\em acyclic}, i.e. it has no oriented cycles.

Let $k$ be a field. The path algebra $kQ$ is 
a finite-dimensional hereditary $k$-algebra. 
Let $\mod kQ$ be the category of all 
$k$-finite-dimensional right $kQ$-modules. 
The projective indecomposable modules are given up to isomorphism 
by $P_i=e_ikQ$, where $e_i$ denotes the path of length zero at the vertex $i$. 
The head of $P_i$ is the simple module $S_i$ concentrated at the vertex~$i$.

We denote by $\cd_Q$ the bounded derived category $\cd^b(\mod kQ)$. 
Endowed with the  shift (=suspension) functor $\Sigma$
it is a triangulated category. 
By \cite{Happel87} the derived category $\cd_Q$ is a Krull--Schmidt category which 
admits Auslander-Reiten triangles or, equivalently, a Serre functor, 
cf. \cite{ReitenVanDenBergh02}. Let $\tau_{\cd_Q}$
be the Auslander-Reiten translation. 
The Serre functor is then given by $S=\Sigma \circ \tau_{\cd_Q}$ and
is isomorphic to the derived tensor product with 
the bimodule $D(kQ)=\Hom_k(kQ,k)$. 
Let us denote by $\ind(\cd_Q)$ a full subcategory of $\cd_Q$ whose 
set of objects contains exactly one representative of each isomorphism class of indecomposable objects of $\cd_Q$.
If $Q$ is an orientation of an ADE Dynkin diagram, Happel showed that $\ind(\cd_Q)$ can be fully described in
combinatorial terms using the so-called repetition quiver. 
The {\em repetition quiver $\Z Q$}, cf. \cite{Riedtmann80a}, has the set of 
vertices $Q_0 \times \Z$. We obtain its set of arrows from $Q_1$ as follows: 
For each arrow $\alpha: i \to j$ in $Q_1$ and each integer $p$, we have an arrow 
$(\alpha, p): (i,p) \to (j,p)$ and an arrow $\sigma(\alpha,p): (j,p-1) \to (i,p)$. 
We define the automorphism $\tau$ of $\Z Q$ to be the shift
by one unit to the left, so that we have in particular
$\tau(i,p)=(i, p-1)$ for all vertices $(i, p) \in Q_0 \times \Z$.

Following \cite{Gabriel80} \cite{Riedtmann80}, we
define the {\em mesh category $k(\Z Q)$} to be the $k$-category whose
objects are the vertices of $\Z Q$ and whose morphism space from
$a$ to $b$ is the space of all $k$-linear combinations of paths from
$a$ to $b$ modulo the subspace spanned by all elements $u r_x v$, where
$u$ and $v$ are paths and $r_x$ is the sum of all paths from $\tau(x)$ to $x$.
For example if $Q=\vec{A}_2: 1 \to 2$, the repetition quiver is
\[
\xymatrix@=0.5cm@!{
 & \bt \ar[dr] & & \tau(x) \ar[dr] & & x \ar[dr] & & \bt &  \\
 \ldots & & \bt \ar[ur]   & & \bt \ar[ur]  & & \bt \ar[ur]  & & \ldots 
  }
\]
In the mesh category $k(\Z \vec{A}_2)$ associated with
the quiver $Q=\vec{A}_2: 1 \to 2$, the composition of any two
consecutive arrows vanishes. 
The mesh category $k(\Z Q)$ and $\ind(\cd_Q)$ are related as follows:

\begin{theorem}[Prop.~4.6 of \cite{Happel87}]
\label{thm:Happel}
There is a canonical fully faithful functor
\[
H: k(\Z Q) \to \ind(\cd_Q)
\]
taking each vertex $(i,0)$ to the indecomposable
projective module $P_i$, $i\in Q_0$. It is an equivalence iff
$Q$ is an orientation of an ADE Dynkin diagram. 
\end{theorem}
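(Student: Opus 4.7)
The plan is to build $H$ from the Auslander--Reiten (AR) structure of $\cd_Q$ and then identify the mesh relations with the vanishing of two consecutive morphisms in AR-triangles. On objects, set $H(i,p):=\tau_{\cd_Q}^{-p}(P_i)$. Since $kQ$ is hereditary, $\tau_{\cd_Q}=\Sigma^{-1}\circ S$ is an autoequivalence of $\cd_Q$ and the $P_i$ are pairwise non-isomorphic indecomposables, so the $H(i,p)$ form a family of pairwise non-isomorphic indecomposables in $\ind(\cd_Q)$; by construction the assignment intertwines the translations: $H(\tau(i,p))=\tau_{\cd_Q}H(i,p)$.

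On horizontal arrows $(\alpha,p):(i,p)\to(j,p)$, take $\tau_{\cd_Q}^{-p}$ applied to the morphism $P_i\to P_j$ induced by $\alpha$. For diagonal arrows $\sigma(\alpha,p):(j,p-1)\to(i,p)$, invoke AR-theory: each $H(x)$ with $x=(i,p)$ sits in a unique AR-triangle
\[
\tau_{\cd_Q}H(x)\longrightarrow\bigoplus_{\beta}H(s(\beta))\longrightarrow H(x)\longrightarrow\Sigma\tau_{\cd_Q}H(x),
\]
indexed over the arrows $\beta$ of $\Z Q$ ending at $x$, with middle-term components being irreducible morphisms. Define $H(\sigma(\alpha,p))$ as the corresponding irreducible morphism; the mesh relation $r_{(i,p)}$ is then sent to the composition of the first two maps of this AR-triangle, which vanishes. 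Checking that the horizontal components in this middle term match the previously defined images produces a well-defined $k$-linear functor $H:k(\Z Q)\to\ind(\cd_Q)$ with $H(i,0)=P_i$.

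Full faithfulness, the main technical obstacle, is proved by a dimension count. Since $H$ is $\tau$-equivariant and $\tau_{\cd_Q}$ is an autoequivalence, it suffices to compare $\Hom_{k(\Z Q)}((i,0),(j,p))$ with $\Hom_{\cd_Q}(P_i,\tau_{\cd_Q}^{-p}P_j)$ for all $i,j,p$. On the mesh side, these are spanned by paths modulo mesh relations and admit an explicit combinatorial description given by the knitting algorithm on $\Z Q$. On the derived side, the hereditarity of $kQ$ forces any object of $\cd_Q$ to be a direct sum of shifts of modules, so the above Hom-spaces reduce to $\Hom$ and $\Ext^1$ groups in $\mod kQ$ between known indecomposables, computable via the AR-formula $\Hom_{\cd_Q}(X,\Sigma Y)\cong D\Hom_{\cd_Q}(Y,\tau_{\cd_Q}X)$ and induction along the AR-quiver starting from a projective slice. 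A vertex-by-vertex comparison of the two calculations yields the required bijection and shows that no further relations are imposed in $\cd_Q$ beyond the mesh relations.

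For the equivalence criterion, if $Q$ is an ADE Dynkin orientation then, by Gabriel's theorem, every indecomposable of $\mod kQ$ lies in the preprojective component and is isomorphic to some $\tau_{kQ}^{-r}P_j$; since every indecomposable of $\cd_Q$ is of the form $\Sigma^n M$ for $M$ indecomposable in $\mod kQ$ and the Serre-functor periodicity available in the Dynkin case lets one rewrite each such $\Sigma^n M$ as a $\tau_{\cd_Q}$-translate of some $P_j$, one obtains $\Sigma^n M=H(j,q)$ for suitable $(j,q)$, proving essential surjectivity. Conversely, if $H$ is essentially surjective, then $\ind(\cd_Q)$ is parametrised by $Q_0\times\Z$, forcing $\mod kQ$ to have no regular indecomposables, which by AR-theory for hereditary algebras implies that $Q$ is Dynkin.
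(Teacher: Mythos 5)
The paper does not prove this theorem; it quotes it as Proposition~4.6 of Happel's cited article and uses it as a black box, so there is no internal proof to compare against. Your sketch follows the expected Auslander--Reiten-theoretic route: translate the projectives by powers of $\tau_{\cd_Q}$, send arrows to irreducible morphisms, and kill the mesh relations via the vanishing of the composite of the first two maps in each AR-triangle. That is the right skeleton, and the Dynkin criterion at the end is fine.

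Two steps, however, carry essentially all of the difficulty and are currently gaps. First, the definition of $H$ on diagonal arrows is not canonical as stated: the AR-triangle at $H(i,p)$ determines the middle-term morphism only up to an automorphism of the middle term $E$, so there is no single ``corresponding irreducible morphism''. One must fix, inductively in $p$, a direct-sum decomposition of each middle term that is \emph{compatible} with the horizontal maps already chosen (equivalently, choose bases of the bimodules $\rad/\rad^2$ of irreducible morphisms coherently), and only then are the functor $H$ and the claim that $H(r_{(i,p)})=0$ well posed; the phrase ``checking that the horizontal components match'' is precisely this inductive verification and cannot be waved away. Second, ``a vertex-by-vertex comparison of the two calculations yields the required bijection'' hides the actual content of the theorem: a dimension count by itself gives neither faithfulness nor fullness. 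One needs first to prove fullness directly—every non-isomorphism into a transjective object factors through the right almost split morphism, so by directedness of the transjective component every morphism is a $k$-linear combination of compositions of irreducibles—and only then does a matching dimension count force bijectivity. Moreover, for the knitting recursion to hold on the $\cd_Q$-side one must show that the connecting-map term $\Hom_{\cd_Q}\bigl(H(x),\Sigma\tau_{\cd_Q}H(y)\bigr)$ contributes nothing for the relevant pairs $(x,y)$; this is where hereditarity of $kQ$ is genuinely used, and it should be spelled out rather than folded into ``computable via the AR-formula and induction.'' With these two points supplied the argument is sound.
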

Note that the map $\tau$ induces naturally an autoequivalence on $k(\Z Q)$. 
Happel showed that $H \circ \tau$ is isomorphic to $\tau_{\cd_Q}\circ H$.  
We will therefore denote $\tau_{\cd_Q} $ by $\tau$. 

\subsection{The regular and the singular Nakajima category}
\label{ss:the-Nakajima-categories}
Let $Q$ be a finite acyclic quiver as in section~\ref{ss:repetition-quivers-and-Happels-theorem}.
The {\em framed quiver $\tilde{Q}$} is obtained from $Q$ by adding, for each vertex $i$, a new vertex $i'$ and
a new arrow $i \to i'$. For example, if $Q$ is the quiver $1 \to 2$, the
framed quiver is
\[ 
\xymatrix{ 
2 \ar[r] & 2' \\
1 \ar[r] \ar[u] & 1' .
} 
\]
Let $\Z\tilde{Q}$ be the repetition quiver of $\tilde{Q}$. We refer to the
vertices $(i', p)$, $i\in Q_0$, $p\in \Z$, as the {\em frozen vertices} of $\Z \tilde{Q}$
and mark them by squares .
For example, if the underlying quiver of $Q$ is the Dynkin diagram $A_2$, the repetition $\Z\tilde{Q}$ is the quiver
\[ 
\xymatrix{ \cdots  \ar[r]  & \cdot \ar[r] \ar[rd] & \boxed{ } \ar[r]  &  \cdot \ar[r]   \ar[rd] &  \boxed{ } \ar[r] & \cdot \ar[r]   \ar[rd] & \cdots \\
\cdots  \ar[r]  \ar[ru]  & \boxed{}  \ar[r]  &  \cdot  \ar[r]  \ar[ru]  &  \boxed{ } \ar[r]  & \cdot  \ar[r]  \ar[ru]  & \boxed{} \ar[r]  &  \cdots\ . }
\]
For a vertex $x=(i,p)$, we put $\sigma(x)=(i', p-1)$
and for a vertex $(i', p)$, we put $\sigma(i',p)=(i, p)$.

The {\em regular Nakajima category $\cR$} is the mesh
category $k(\Z\tilde{Q})$, where we only impose mesh relations associated with the {\em non frozen vertices}. The {\em singular Nakajima category
$\cs$} is the full subcategory of $\cR$ whose objects are the frozen 
vertices. 

Notice that while $\cR$ is given by a quiver with relations, it
is not clear how to describe the subcategory $\cs\subset\cR$
in this way. In Theorem~2.4 of \cite{KellerScherotzke13a}, we have shown that 
$\cs$ is given by a quiver $Q_\cs$ with relations such that the vertices of $Q_\cs$ 
are the frozen vertices of $\Z \tilde{Q}$, 
the number of arrows in $Q_{\cs}$ from $\sigma(x)$ to $\sigma(y)$ 
equals the dimension of 
\[
\Ext^1_{\cd_Q}(H(y), H(x)) 
\]
and the minimal number of relations in the space of paths from $\sigma(x)$ to
$\sigma(y)$ is given by the dimension of
\[
\Ext^2_{\cd_Q} (H(y), H(x)).
\]
For the quiver $Q: 1\to 2$, we find that $Q_\cs$ is the quiver
\[
\begin{xy} 0;<0.5pt,0pt>:<0pt,-0.5pt>:: 
(-50,0) *+{\cdots} ="-1",
(-100,75) *+{\cdots}="-2",
(0,75) *+{\boxed{}} ="0",
(50,0) *+{\boxed{}} ="1",
(100,75) *+{\boxed{}} ="2",
(150,0) *+{\boxed{}} ="3",
(200,75) *+{\boxed{}} ="4",
(250,0) *+{\boxed{}} ="5",
(300,75) *+{\boxed{}} ="6",
(350,0) *+{\boxed{}} ="7",
(400,75) *+{\cdots} ="8",
(450,0) *+{\cdots} ="9",
"-1", {\ar "1"},
"-1", {\ar "2"},
"-2", {\ar "0"},
"-2", {\ar "1"},
"0", {\ar_a "2"},
"0", {\ar|-*+{{\scriptstyle b}} "3"},
"1", {\ar"3"},
"1", {\ar"4"},
"2", {\ar_a "4"},
"2", {\ar|-*+{{\scriptstyle b}} "5"},
"3", {\ar^a "5"},
"3", {\ar|-*+{{\scriptstyle c}} "6"},
"4", {\ar_a "6"},
"4", {\ar"7"},
"5", {\ar"7"},
"5", {\ar"8"},
"6", {\ar"8"},
"6", {\ar"9"},
"7", {\ar"9"},
\end{xy}
\]
and that $\cs$ is isomorphic to the path category of $Q_\cs$ modulo the ideal
generated by all relations of the form $ab-ba$, $ac-ca$, $a^3-cb$ and $bc-a^3$ 
(we denote all horizontal arrows by $a$, all rising arrows by $b$ and all 
descending arrows by $c$).

\subsection{Graded quiver varieties} 
\label{ss:graded-quiver-varieties} Although, from a strictly
logical point of view, we do not
need graded quiver varieties in this article, we include this
section for the convenience of the reader.
Let us fix a dimension vector $w: \cs_0 \to \N$, i.e.
a function with finite support.
The {\em affine graded quiver variety $\cm_0(w)$} is
the affine variety $\rep_w(\cs)$ of $\cs$-modules $M$ 
such that $M(u)=k^{w(u)}$ for each vertex $u\in \cs_0$. 
This definition is equivalent to Nakajima's original 
definition in \cite{Nakajima01} \cite{Nakajima11} ($Q$ bipartite) and 
to the definition in \cite{Qin12} ($Q$ acyclic), cf. the proof of Theorem~2.4 of \cite{LeclercPlamondon12},  
based on \cite{Lusztig98a} \cite{LebruynProcesi90}.

Now in addition to the dimension vector $w: \cs_0 \to \N$, let us
fix a dimension vector $v: \cR_0\setminus \cs_0\to \N$ of $\cR$.
Let $\rep_{v,w}(\cR)$ denote the variety of $\cR$-modules of dimension 
vector $(v,w)$.  Let $G_v$ be the product of the groups $\GL(k^{v(x)})$, 
where $x$ runs through the non frozen vertices. The group $G_v$ acts 
on the variety $\rep_{v,w}(\cR)$ by base change in the spaces
$k^{v(x)}$. To define the smooth graded quiver variety $\cm(v,w)$, we consider 
the set $\tilde{\cm}(v,w) \subset \rep_{v,w}(\cR)$ formed by the $\cR$-modules 
$M$ with dimension vector $(v,w)$ which are {\em stable}, i.e. do not have 
non zero $\cR$-submodules which restrict to the zero module of $\cs$.  
The {\em graded quiver variety} is the quotient 
\[
\cm(v,w) =\tilde{\cm}(v,w)/G_v. 
\]
For this definition and the following facts, we refer to Nakajima's 
work \cite{Nakajima01} \cite{Nakajima11} for the case where $Q$ is Dynkin 
or bipartite and to Qin \cite{Qin12} and Kimura-Qin \cite{KimuraQin12} for 
the extension to the case of an arbitrary acyclic quiver $Q$.
The set  $\cm(v,w)$ canonically becomes a smooth quasi-projective variety and the projection map
\[
\pi : \cm(v,w) \to \cm_0(w)
\]
taking an $\cR$-module $M$ to its restriction $\res M$ is a proper map 
(notice that $\res$ is constant on the $G_v$-orbits). 
We denote by $\cm(w)=\coprod_v \cm(v,w)$  the disjoint union over all dimension 
vectors $v$. 

In \cite{Nakajima01} Nakajima shows that the affine quiver variety admits a finite stratification
\[
\cm_0(w)= \coprod_v  \cm_0^{bs}(v,w) 
\]
into the locally closed smooth subsets $\cm^{bs}_0(v,w)$ formed by
the orbits of {\em bistable} (i.e. stable and co-stable) representations
(these are called `regular' in \cite{Nakajima01}).
He also shows that we have
\[
\overline{\cm^{bs}_0(v,w)} = \coprod_{v' \le v} \cm_0^{bs}(v',w) ,
\]
where the order on the dimension vectors is given by 
$v' \le v$ if and only if $v'(i) \le v(i)$ for all $ i \in \cR_0\setminus \cs_0$.
As shown in \cite{KellerScherotzke13a}, the strata $\cm_0^{bs}(v,w)$ 
can be described by 
\[
\cm_0^{bs}(v,w) =\{ L \in \cm_0(w) | \dimv K_{LR} L =(v,w) \} ,
\]
where $K_{LR}:\Mod \cs \to \Mod \cR$ is the intermediate Kan extension 
in the sense of Section~\ref{s:intermediate-extensions} associated
with the restriction functor $\res: \Mod \cR \to \Mod \cs$. 

\subsection{Configurations} \label{ss:configurations}
Let $C$ be a subset of the set of vertices of the repetition quiver $\Z Q$. 
Let $\cR_C$ be the quotient of $\cR$ by the ideal generated by the identities 
of the frozen vertices not belonging to $\sigma^{-1}(C)$ and let $\cs_C$ be the
full subcategory of $\cR_C$ formed by the vertices in $\sigma^{-1}(C)$. 
Notice that $\Mod(\cR_C)$ is a subcategory
of $\Mod(\cR)$ and similarly $\Mod(\cs_C)$ a subcategory of
$\Mod(\cs)$. Let $\res^C: \Mod(\cR_C) \to \Mod(\cs_C)$
be the restriction functor. Clearly, it is just the restriction of the 
functor $\res: \Mod(\cR) \to \Mod(\cs)$ to the subcategories under
consideration. The left and right adjoints $K_L$ and $K_R$ of
$\res$ take the subcategory $\Mod(\cs_C)$ of $\Mod(\cs)$ to
$\Mod(\cR_C)$ and thus induce left and right adjoints
$K_L^C$ and $K_R^C$ of $\res^C$ so that we have 
\[
\xymatrix{
\Mod(\cR_C) \ar[d]|-*+{{\scriptstyle \res^C}} \\
\Mod(\cs_C). \ar@<3ex>[u]^{K^C_L} \ar@<-3ex>[u]_{K_R^C}
}
\]
 The functor $\res^C$ is a localization of abelian categories in the sense of \cite{Gabriel62}, or equivalently, its adjoints are fully faithful.
 {\em In the sequel, we will omit the exponents $C$ in the 
notation for the functors $K^C_L$ and $K_R^C$ and simply write $K_L$ and
$K_R$}.

To ensure that the category of finite-dimensional $\cR_C$-modules has 
global dimension at most two, we make the following assumption on $C$.
\begin{assumption} \label{main-assumption}
For each non frozen vertex $x$ of $\Z \tilde{Q}$, the sequences
\begin{equation} \label{eq:left-exact-sequences}
0 \to \cR_C(?,x) \to \bigoplus_{x \to y} \cR_C(?,y) \quad \mbox{and}\quad
0 \to \cR_C(x,?) \to \bigoplus_{y \to x} \cR_C(y,?) 
\end{equation}
are exact, where the sums range over all arrows of $\Z \tilde{Q}$ whose
source (respectively, target) is $x$.
\end{assumption}
Note that the assumption holds if $C$ is the set of all vertices of $\Z Q$.
The following situation provides further examples of sets $C$ satisfying 
the assumption: Assume that $\ce$ is a 
$\Hom$-finite exact Krull--Schmidt category which is standard 
(in the sense of section~2.3, page~63 of \cite{Ringel84}) and
whose stable Auslander--Reiten quiver is $\Z Q$. 
Let us define $C$ as the set of vertices $c$ such that $\sigma^{-1}(c)$ corresponds
to a projective indecomposable object of $\ce$.
Then the sequences (\ref{eq:left-exact-sequences}) are associated with
Auslander--Reiten conflations of $\ce$ and hence are exact.
In section~\ref{ss:piecewise-hereditary}, we show how iterated tilted
algebras of Dynkin type give rise to such configurations $C$.


In fact, we have shown in Theorem~5.23 of \cite{KellerScherotzke13a} 
that when the assumption holds and $Q$ is a Dynkin quiver, then the
set $C$ always comes from the choice of a $\Hom$-finite exact
Krull--Schmidt category which is standard and whose stable 
Auslander--Reiten quiver is $\Z Q$.

\subsection{The desingularization theorem} 
\label{ss:proof-of-desingularization}

Let $M$ be a finite-dimensional $\cs_C$-module such that $K_{LR}(M)$ is rigid.
Recall that a variety is {\em equidimensional} if all its irreducible components
have the same dimension. 

\begin{lemma} \label{lemma:smooth-Grassmannian}
Each quiver Grassmannian $\Gr_e(K_{LR}(M))$  is smooth and
equi\-dimensional. 
\end{lemma}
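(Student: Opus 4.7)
The natural approach is the standard deformation-theoretic one. The variety $\Gr_e(K_{LR}(M))$ sits as a closed subscheme of the product of ordinary vector-space Grassmannians $\prod_x \Gr_{e(x)}(K_{LR}(M)(x))$, and at a point $U \subset K_{LR}(M)$ its Zariski tangent space is canonically identified with $\Hom_{\cR_C}(U, K_{LR}(M)/U)$, while the space of obstructions is $\Ext^1_{\cR_C}(U, K_{LR}(M)/U)$. Thus the lemma reduces to proving the vanishing $\Ext^1_{\cR_C}(U, K_{LR}(M)/U) = 0$ for every $\cR_C$-submodule $U$ of $K_{LR}(M)$, together with a uniform count of the tangent-space dimension.

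I would first observe that Assumption~\ref{main-assumption} forces the category $\mod\cR_C$ to have global dimension at most two: the two left-exact sequences~(\ref{eq:left-exact-sequences}) extend to projective resolutions of length at most two of the simple modules at non-frozen vertices, while the simples at frozen vertices have projective dimension at most one. Consequently, the Euler form
\[
\chi(\dimv X,\dimv Y) = \dim\Hom(X,Y) - \dim\Ext^1(X,Y) + \dim\Ext^2(X,Y)
\]
is a well-defined function of dimension vectors alone.

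Next, I would deduce the required $\Ext^1$-vanishing from rigidity by combining two long exact sequences coming from $0 \to U \to K_{LR}(M) \to K_{LR}(M)/U \to 0$. Applying $\Hom_{\cR_C}(-, K_{LR}(M))$ and using $\Ext^1_{\cR_C}(K_{LR}(M), K_{LR}(M)) = 0$ produces an injection $\Ext^1(U, K_{LR}(M)) \hookrightarrow \Ext^2(K_{LR}(M)/U, K_{LR}(M))$. Feeding this into the sequence obtained by applying $\Hom_{\cR_C}(U, -)$ expresses $\Ext^1_{\cR_C}(U, K_{LR}(M)/U)$ as a subquotient of explicit $\Ext^2$-groups, which I would then force to vanish by exploiting the two-term projective-injective resolution of $K_{LR}(M)$ afforded by Lemma~\ref{lemma:Auslander-category}(c) together with the stability and co-stability of $K_{LR}(M)$. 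Once smoothness is established pointwise, equidimensionality is automatic: the tangent-space dimension $\dim\Hom_{\cR_C}(U, K_{LR}(M)/U)$ becomes a locally constant function on $\Gr_e(K_{LR}(M))$ whose value is determined by $e$ and $\dimv K_{LR}(M)$ via the Euler form.

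The principal obstacle will be controlling the residual $\Ext^2$-terms that appear in the diagram chase: rigidity of $K_{LR}(M)$ itself does not obviously descend to arbitrary subquotients, and $\Ext^2$ does not vanish identically on $\mod\cR_C$. The way out should be the particular Auslander-type shape of $\cR_C$, whose indecomposable projectives arise as Yoneda images of the objects of $\cs_C$ (cf.~Lemma~\ref{lemma:Auslander-category}); this structure is exactly what should make the relevant $\Ext^2$-contributions cancel and leave the desired vanishing.
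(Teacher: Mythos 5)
The paper's proof is a one-paragraph citation: it verifies the three hypotheses of Proposition~7.1 of \cite{CerulliFeiginReineke12a} --- that $K_{LR}(M)$ is finite-dimensional, that $\mod\cR_C$ has global dimension at most two, and that $\Ext^i(K_{LR}(M),K_{LR}(M))=0$ for all $i\geq 1$, the last following from the rigidity assumption together with the fact (Lemma~4.15 of \cite{KellerScherotzke13a}) that $K_{LR}(M)$ has both projective and injective dimension at most one. You have correctly identified all three of these inputs, so the ingredients are right; the difference is that you then try to \emph{reprove} the content of Proposition~7.1 from scratch, and the diagram chase you sketch does not close.

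The gap is genuine and you half-acknowledge it yourself. Writing $N=K_{LR}(M)$, the chase you propose applies $\Hom(U,-)$ to $0\to U\to N\to N/U\to 0$ after first killing $\Ext^1(U,N)$, and thereby lands on the inclusion $\Ext^1(U,N/U)\hookrightarrow\Ext^2(U,U)$. But $\Ext^2(U,U)$ has no reason to vanish for an arbitrary submodule $U$ --- rigidity of $N$ says nothing about $U$, and global dimension two permits nonzero $\Ext^2$. The vague appeal to ``the particular Auslander-type shape of $\cR_C$'' does not rescue this: in the general setting of this lemma $\cR_C$ is not an Auslander category, and Lemma~\ref{lemma:Auslander-category}(c), which you invoke for the two-term projective-injective resolution, applies only to the special case $\ca=\mod(\mod(\cp))$ treated in section~\ref{ss:Auslander-category}. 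The correct source for $\operatorname{pd}N\le 1$ and $\operatorname{id}N\le 1$ in the Nakajima setting is Lemma~4.15 of \cite{KellerScherotzke13a}, which the paper cites. With that in hand, the chase closes if you instead apply $\Hom(-,N/U)$: this sandwiches $\Ext^1(U,N/U)$ between $\Ext^1(N,N/U)$ and $\Ext^2(N/U,N/U)$, and each of those vanishes --- the first by $\operatorname{pd}N\le 1$ together with $\Ext^1(N,N)=0$, the second by $\operatorname{id}N\le 1$ together with $\operatorname{gldim}\le 2$ applied to $\Hom(N/U,-)$. The same pair of bounds also kills $\Ext^2(U,N/U)$, which you need (and do not address) to make the Euler-form count of $\dim\Hom(U,N/U)$ literal for the equidimensionality claim.
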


\begin{proof} 
Indeed, by Proposition~7.1 of \cite{CerulliFeiginReineke12a}, 
we only need to check the following:
The module $K_{LR}(M)$ is finite-dimensional, 
the space $\Ext^i(K_{LR} M, K_{LR} M)$ vanishes for all 
$i \ge 1$ and the category of finite-dimensional $\cR_C$-modules
 is of global dimension at most $2$. 
The first condition is satisfied by section~4.8 of \cite{KellerScherotzke13a} 
and the last one is shown in Lemma~3.5 of \cite{KellerScherotzke13a}.
Finally, the module $K_{LR}(M)$ is both of projective and of injective dimension 
at most one by Lemma~4.15 of \cite{KellerScherotzke13a} and 
$K_{LR}(M)$ is rigid by our assumption.
Therefore $\Ext^i(K_{LR} (M), K_{LR} (M))$ vanishes for all $i \ge 1$.  
\end{proof}

We introduce a stratification with finitely 
many strata on $\Gr_w(M)$ using Nakajima's stratification of the
representation spaces $\cm_0(w)$.
Following \cite[2.3]{CerulliFeiginReineke12}, we write 
\[
\Hom^0(w, M)=\{ (N, f) | N \in \mod_w (\cs_C) \text{ and } f: N \to M \text{ is injective} \}.
\]
Then $\Gr_w(M)$ is isomorphic to the quotient $\Hom^0(w, M) / G_w$ where $G_w$ is the product of the groups 
GL$(k^{w(x)})$ for all $x \in \cs_0$. 
We have a canonical map
\[
p: \Hom^0(w, M) \to \mod_w (\cs_C) 
\]
given by the projection. For a function with finite support 
$v: \cR_0 \setminus \cs_0 \to \N$, we define the locally closed subset 
$\cm_0^{bs}(v,w)^{\Gr}$ of $\Gr_w(M)$ by
\[
\cm_0^{bs}(v,w)^{\Gr}= p^{-1} ( \cm_0^{bs}(v,w))/G_w.
\]
For fixed $w$, the subset $\cm_0^{bs}(v,w)$ is non empty only for finitely
many functions $v$. Thus, the variety $\Gr_w(M)$ decomposes into the disjoint 
union of finitely many strata $ \cm_0^{bs}(v,w)^{\Gr}$. 
We have 
\begin{equation} \label{eq:closure-of-strata}
\overline{\cm_0^{bs}(v,w)^{\Gr}} \subset  
p^{-1}( \overline{\cm_0^{bs}(v,w) })/G_w = 
\coprod_{v' \le v}\cm_0^{bs}(v',w)^{\Gr} .
\end{equation}
Thus, if $v$ is minimal with the property that $\cm_0^{bs}(v,w)^{\Gr}$ is not empty, 
then it is closed. 

\begin{lemma} \label{lemma:definition-vc} Let $C$ be an irreducible component
of $\Gr_w(M)$. Then there is a unique dimension vector $v_C$ such that
\[
C\cap\cm_0^{bs}(v_C,w)^{\Gr}
\]
is an open dense subset in $C$. The vector $v_C$ is the unique maximal
element in the set of vectors $v$ such that 
\[
C\cap \cm_0^{bs}(v,w)^{\Gr}
\]
is non empty.
\end{lemma}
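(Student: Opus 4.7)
The plan is to exploit two structural facts: that $\Gr_w(M)$ is a \emph{finite} disjoint union of locally closed strata $\cm_0^{bs}(v,w)^{\Gr}$, and that the stratum-closure relation in (\ref{eq:closure-of-strata}) is governed by the partial order on dimension vectors $v$. Irreducibility of $C$ will then force exactly one stratum to meet $C$ in a dense subset, and the order relation will force that stratum's index to be maximal.

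First, I would intersect the stratification with $C$ to write
\[
C = \coprod_{v}\bigl(C\cap \cm_0^{bs}(v,w)^{\Gr}\bigr),
\]
a finite disjoint union, and then take closures to get
\[
C=\bigcup_{v}\overline{C\cap \cm_0^{bs}(v,w)^{\Gr}}.
\]
Since $C$ is irreducible and this is a finite union of closed subsets of $C$, one summand must equal $C$; call the corresponding index $v_C$. Thus $C\cap \cm_0^{bs}(v_C,w)^{\Gr}$ is dense in $C$. Openness in $C$ is automatic: any locally closed subset $S$ of a topological space is open in its closure, so $C\cap \cm_0^{bs}(v_C,w)^{\Gr}$ is open in $C\cap \overline{\cm_0^{bs}(v_C,w)^{\Gr}}$, and the latter is a closed subset of $C$ that contains the dense set $C\cap \cm_0^{bs}(v_C,w)^{\Gr}$, hence coincides with $C$.

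For uniqueness, if another $v'$ also had $C\cap \cm_0^{bs}(v',w)^{\Gr}$ open and dense in $C$, we would obtain two non-empty open subsets of the irreducible space $C$ that are nevertheless disjoint (the strata themselves being disjoint), which is impossible. For maximality, take any $v$ with $C\cap \cm_0^{bs}(v,w)^{\Gr}\ne\emptyset$ and pick a point $x$ in this intersection. Then $x\in C=\overline{C\cap \cm_0^{bs}(v_C,w)^{\Gr}}\subset \overline{\cm_0^{bs}(v_C,w)^{\Gr}}$, and by (\ref{eq:closure-of-strata}) the latter closure is the disjoint union of the strata $\cm_0^{bs}(v',w)^{\Gr}$ with $v'\le v_C$. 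Since $x$ belongs to the stratum indexed by $v$ and the strata are pairwise disjoint, this forces $v\le v_C$, proving the desired maximality.

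No step looks delicate; the only thing to be careful about is the openness-in-$C$ assertion, which is really just the general fact that locally closed subsets are open in their closures applied inside $C$. The main conceptual input is the closure formula (\ref{eq:closure-of-strata}) borrowed from Nakajima's stratification, together with the finiteness of the stratification, both already at our disposal.
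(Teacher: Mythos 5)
Your proof is correct and rests on the same ingredients as the paper's (the closure formula~(\ref{eq:closure-of-strata}), finiteness and disjointness of the strata, and irreducibility of $C$), differing only in the order of steps: you first pick out the dominant stratum by writing $C$ as a finite union of stratum-closures and invoking irreducibility, then deduce openness from local closedness and derive maximality from~(\ref{eq:closure-of-strata}), whereas the paper starts from a maximal $v$ in the set $V$ of non-empty strata and uses the predecessor-closed-union argument to show directly that the complementary union is closed in $C$. Both are sound and substantively the same argument.
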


\begin{proof} 
Let $V$ be the set of functions $v$ such
that $C\cap \cm_0^{bs}(v,w)^{\Gr}$ is not empty. Then it follows from
(\ref{eq:closure-of-strata}) that for a
subset $V'\subset V$ stable under taking predecessors for the componentwise order, 
the union of the subsets $C\cap \cm_0^{bs}(v,w)^{\Gr}$, where $v$ ranges over $V'$,
is closed in $C$.  In particular, this happens if $V'$ is the complement
of a maximal element $v$ of $V$. Thus, if $v_C$ is maximal in $V$, the
set $C\cap \cm_0^{bs}(v_C,w)^{\Gr}$ is open and dense in $C$. Since
the strata $\cm_0^{bs}(v,w)^{\Gr}$ are pairwise disjoint, the maximal
element $v_C$ is uniquely determined by the irreducible component $C$.
\end{proof}

As $K_{LR} (M)$ is stable, each submodule of $K_{LR}(M)$ is stable and
in particular, the points of $\Gr_{(v,w)}(K_{LR}(M))$ yield points of $\cm(v,w)$. 
We define the subset $\cm^{bs}(v,w)^{\Gr}$ of $\Gr_{(v,w)}(K_{LR}(M))$
in analogy with $\cm_0^{bs}(v,w)^{\Gr}$, so that a point of $\Gr_{(v,w)}(K_{LR}(M))$
lies in $\cm^{bs}(v,w)^{\Gr}$ if and only if the corresponding submodule
is bistable. 

\begin{lemma} \label{lemma:isom-on-bistables}
\begin{itemize}
\item[a)]
The restriction functor induces an isomorphism
\[
\cm^{bs}(v,w)^{\Gr} \iso \cm^{bs}_0(v,w)^{\Gr}.
\]
\item[b)] The varieties $\cm^{bs}(v,w)^{\Gr}$, 
$\overline{\cm^{bs}(v,w)^{\Gr}}$ and  $\cm^{bs}_0(v,w)^{\Gr}$
are smooth and equidimensional.
\end{itemize}
\end{lemma}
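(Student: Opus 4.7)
The plan is to establish (a) first and then derive (b) by combining it with Lemma~\ref{lemma:smooth-Grassmannian}. For the forward direction of (a), given a bistable submodule $U \subset K_{LR}(M)$ of dimension vector $(v,w)$, the characterization of bistable modules discussed after Lemma~\ref{lemma:commutative-square} gives $U \cong K_{LR}(\res(U))$, so $\dimv K_{LR}(\res(U)) = (v,w)$, and therefore $\res(U)$ lies in $\cm_0^{bs}(v,w)^{\Gr}$ by the description recalled in section~\ref{ss:graded-quiver-varieties}.

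For the inverse, a point $N \hookrightarrow M$ of $\cm_0^{bs}(v,w)^{\Gr}$ should be sent to the submodule $K_{LR}(N) \subset K_{LR}(M)$. To justify this, I would consider the naturality square
\[
\xymatrix{
K_L(N) \ar[r] \ar[d] & K_L(M) \ar[d] \\
K_R(N) \ar[r] & K_R(M)
}
\]
in which the bottom arrow is mono because $K_R$ is left exact. The image of $K_L(N) \to K_R(M)$, computed via $K_L(N) \to K_R(N) \hookrightarrow K_R(M)$, equals $K_{LR}(N)$ viewed as a submodule of $K_R(M)$, while the alternative factorization $K_L(N) \to K_L(M) \twoheadrightarrow K_{LR}(M) \hookrightarrow K_R(M)$ places this same image inside $K_{LR}(M)$. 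The resulting submodule of $K_{LR}(M)$ is bistable of dimension vector $(v,w)$, and the two constructions are mutually inverse since $\res(K_{LR}(N)) = N$ and $K_{LR}(\res(U)) \cong U$ for bistable $U$. To upgrade the bijection to an isomorphism of varieties, I would invoke the analogous isomorphism $\cm^{bs}(v,w) \iso \cm^{bs}_0(v,w)$ at the level of affine graded quiver varieties, due to Nakajima and extended to acyclic quivers by Qin, and transfer it along the natural embeddings of the Grassmannian strata into the ambient quiver varieties.

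For part (b), bistability is an open condition on $\cR_C$-modules of fixed dimension vector, so $\cm^{bs}(v,w)^{\Gr}$ is open in $\Gr_{(v,w)}(K_{LR}(M))$, which is smooth and equidimensional by Lemma~\ref{lemma:smooth-Grassmannian}; hence $\cm^{bs}(v,w)^{\Gr}$ itself is smooth and equidimensional. Because the ambient Grassmannian is smooth, its irreducible and connected components coincide and are pairwise disjoint; consequently $\overline{\cm^{bs}(v,w)^{\Gr}}$ is the disjoint union of those connected components that meet the open stratum (each being the closure of its intersection with the dense open subset), and as a union of connected components of a smooth equidimensional variety is itself smooth and equidimensional. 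Smoothness and equidimensionality of $\cm^{bs}_0(v,w)^{\Gr}$ then follow from the isomorphism in (a). The main obstacle I expect is precisely the upgrade from set-theoretic bijection to scheme-theoretic isomorphism in (a): the inverse map $N \mapsto K_{LR}(N)$ must be shown to be algebraic, which hinges on the constancy of the dimension vector $(v,w)$ on the bistable stratum so that $K_{LR}$ fits into an algebraic family of constant rank, and this is essentially the content of the affine-level isomorphism of Nakajima and Qin that I would cite.
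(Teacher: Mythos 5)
Your argument is essentially the paper's. Part (b) is identical: the open bistable stratum inherits smoothness and equidimensionality from $\Gr_{(v,w)}(K_{LR}(M))$ via Lemma~\ref{lemma:smooth-Grassmannian}, its closure is a union of connected (=irreducible) components of the smooth Grassmannian, and $\cm_0^{bs}(v,w)^{\Gr}$ gets the same properties by (a). For part (a), the paper's description of the inverse is the submodule of $K_{LR}(M)$ \emph{generated by} the subspaces $L(x)$, $x\in\cs_0$; you describe the same submodule as the image of $K_L(N)\to K_R(M)$ computed through $K_L(M)\twoheadrightarrow K_{LR}(M)$, which indeed coincides (both are the image of the map $K_L(N)\to K_R(M)$). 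The one place where you diverge is in handling the scheme-theoretic upgrade: you propose to cite the affine-level isomorphism $\cm^{bs}(v,w)\iso\cm_0^{bs}(v,w)$ of Nakajima/Qin and ``transfer'' it to the Grassmannian strata. This is roundabout and, as you yourself flag, not obviously a clean transfer, since the strata $\cm^{bs}(v,w)^{\Gr}$ map to $\cm(v,w)$ only after forgetting the embedding into $K_{LR}(M)$. The paper sidesteps this: the description ``submodule generated by $L(x)$'' is manifestly algebraic as $L$ varies in a family of constant dimension vector (it is the image of a morphism between families of constant rank), so the inverse is a morphism of varieties directly, with no external input. Your argument works, but the paper's phrasing of the inverse is the tidier way to see it is algebraic.
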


\begin{proof} a) 
Let us first check that this map is bijective: 
Indeed, it is surjective, since a submodule $L \subset M$ is 
obtained by restricting the bistable submodule $K_{LR}(L)\subset K_{LR}(M)$.
It is injective because if a submodule $N \subset K_{LR}(M)$ is costable, it
is generated by the spaces $N(x)$, $x\in \cs_0$, and is
thus determined by its restriction to $\cs$. This also shows how
to construct an inverse of the map: a submodule $L\subset M$
is sent to the submodule of $K_{LR}(M)$ generated by the
spaces $L(x)$, $x\in \cs_0$.

b) By Lemma~\ref{lemma:smooth-Grassmannian}, the variety
$\Gr_{(v,w)}(K_{LR}(M))$ is smooth and equidimensional. Thus,
the same holds for its open subset $\cm^{bs}(v,w)^{\Gr}$.
The closure of this subset is the union of the connected (=irreducible)
components of $\Gr_{(v,w)}(K_{LR}(M))$ which meet $\cm^{bs}(v,w)^{\Gr}$.
Thus, the closure is also smooth and equidimensional.
By~a), we obtain the same assertion for $\cm_0^{bs}(v,w)^{\Gr}$.
\end{proof}

As in section~\ref{s:intro}, we say that a morphism of
algebraic varieties $\pi: X \to Y$ is a {\em desingularization}
if $X$ is smooth, $\pi$ is proper and surjective and induces an
isomorphism from an open dense subset of $X$ onto
an open dense subset of $Y$. Recall that the {\em bistable
quiver Grassmannian} is defined as
\[
\Gr^{bs}_{(v,w)}(K_{LR}(M))=\overline{\cm^{bs}(v, w)^{\Gr}}.
\]

\begin{theorem} \label{thm:desingularization}
As above, we assume that $M$ is an $\cs_C$-module
such that $K_{LR}(M)$ is rigid. Let $w$ be a dimension vector less or equal
to the dimension vector of $M$. Let $\cv_w(M)$ be the set of the
vectors $v_C$, where $C$ ranges over the irreducible components
of $\Gr_w(M)$ (cf. Lemma~\ref{lemma:definition-vc}). Let 
\[
\pi^{bs} : \coprod_{v\in \cv_w(M)} \Gr^{bs}_{(v,w)}(K_{LR}(M)) \to \Gr_w(M),
\]
be the map taking a submodule $L$ to its restriction to $\cs_C$.
\begin{itemize}
\item[a)] The map $\pi^{bs}$
is a desingularization. It induces an isomorphism between the dense open
subsets
\[
\coprod_{v\in\cv_w(M)} \cm^{bs}(v,w)^{\Gr} \to \coprod_{v\in\cv_w(M)}
\cm^{bs}_0(v,w)^{\Gr}  \subset \Gr_w(M).
\]

\item[b)] Let $C$ be an irreducible component of $\Gr_w(M)$ and $v_C$ the
unique vector such that $C\cap \cm_0^{bs}(v_C,w)^{\Gr}$ is a dense open
subset of $C$ (Lemma~\ref{lemma:definition-vc}).
Then the map
\[
\pi_C : \pi^{-1}(C)\cap  \Gr^{bs}_{(v,w)}(K_{LR}(M)) \to C
\]
taking a submodule $L$ to its restriction $\res(L)$ to $\cs_C$ is a desingularization. 
It induces an isomorphism between the dense open subsets
\[
\pi^{-1}(C)\cap \cm^{bs}(v_C,w)^{\Gr} \iso C\cap \cm_0^{bs}(v_C,w)^{\Gr}.
\]
\end{itemize}
\end{theorem}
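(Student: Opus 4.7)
The plan is to deduce Theorem \ref{thm:desingularization} essentially formally from Lemmas \ref{lemma:smooth-Grassmannian}, \ref{lemma:definition-vc} and \ref{lemma:isom-on-bistables}. The four properties to verify for $\pi^{bs}$ in part (a) are: smoothness of the source, properness, surjectivity, and the restriction to an isomorphism between dense open subsets. Most of the substance is already packaged in the preceding lemmas; the main task is assembling them correctly, and the crucial step is identifying the dense open subsets on both sides.

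For smoothness and properness of the source, I would argue as follows. Each $\Gr^{bs}_{(v,w)}(K_{LR}(M))$ is by definition the closure of $\cm^{bs}(v,w)^{\Gr}$ in $\Gr_{(v,w)}(K_{LR}(M))$, hence a union of irreducible components of the latter; since the ambient Grassmannian is smooth and equidimensional by Lemma \ref{lemma:smooth-Grassmannian}, each such closure is smooth. As $K_{LR}(M)$ is finite-dimensional, only finitely many $v \in \cv_w(M)$ produce a non-empty Grassmannian, so the source of $\pi^{bs}$ is a finite disjoint union of closed subvarieties of projective varieties, whence itself projective; properness of $\pi^{bs}$ follows.

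The key identification of dense open subsets goes as follows. On the source, $\coprod_{v \in \cv_w(M)} \cm^{bs}(v,w)^{\Gr}$ is open and dense by the very definition of the bistable Grassmannian as the closure of each stratum. On the target, Lemma \ref{lemma:definition-vc} shows that every irreducible component $C$ of $\Gr_w(M)$ meets $\cm_0^{bs}(v_C, w)^{\Gr}$ in a dense open subset, so $\coprod_{v \in \cv_w(M)} \cm_0^{bs}(v,w)^{\Gr}$ is open and dense in $\Gr_w(M)$. Lemma \ref{lemma:isom-on-bistables}(a) identifies the two strata for each $v$, yielding (after taking disjoint unions) the desired isomorphism on dense opens. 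Surjectivity of $\pi^{bs}$ then comes for free: its image contains a dense subset and is closed by properness, hence equals $\Gr_w(M)$.

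For part (b), which refines (a) to a single irreducible component $C$, I would invoke exactly the same machinery. The map $\pi_C$ inherits properness as the restriction of $\pi^{bs}$ to the closed subset $\pi^{-1}(C)\cap \Gr^{bs}_{(v_C,w)}(K_{LR}(M))$. Restriction identifies $\cm^{bs}(v_C, w)^{\Gr}$ with $C \cap \cm_0^{bs}(v_C, w)^{\Gr}$ by Lemma \ref{lemma:isom-on-bistables}(a), and this latter set is dense open in $C$ by the defining property of $v_C$. Smoothness of the source and surjectivity of $\pi_C$ then follow by the same reasoning as in (a). The one subtle point, which I expect to be the main obstacle, is verifying that the source of $\pi_C$ is the union of precisely those irreducible components of $\Gr^{bs}_{(v_C, w)}(K_{LR}(M))$ which dominate $C$, so that density of $\cm^{bs}(v_C, w)^{\Gr}$ in this union actually holds; this should follow from properness of $\pi^{bs}$ together with the equidimensionality assertion of Lemma \ref{lemma:isom-on-bistables}(b), after restricting to a single connected component of the source.
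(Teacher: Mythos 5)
Your proof is correct and relies on exactly the same ingredients as the paper's: Lemma~\ref{lemma:smooth-Grassmannian} and Lemma~\ref{lemma:isom-on-bistables}(b) for smoothness, projectivity for properness, Lemma~\ref{lemma:definition-vc} and Lemma~\ref{lemma:isom-on-bistables}(a) for the identification of dense open subsets, and a standard topological argument for surjectivity. The one organizational difference is that the paper proves part (b) directly (noting that the domain of $\pi_C$ is smooth and projective, and that the restriction to $\cm^{bs}(v_C,w)^{\Gr}$ gives the birational isomorphism) and then remarks that part (a) is an immediate consequence, whereas you establish (a) globally and then specialize to obtain (b). Both directions involve the same kind of small subtlety: in the paper's order, one must check that $\pi^{-1}(C)\cap \Gr^{bs}_{(v_C,w)}(K_{LR}(M))$ is a union of connected components of the smooth equidimensional variety $\Gr^{bs}_{(v_C,w)}(K_{LR}(M))$ (the paper is silent on this); in your order, you flag the analogous issue explicitly at the end (that the source of $\pi_C$ should be the union of components dominating $C$), which is the same point. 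In short: same approach, opposite order of deduction, with your version spelling out the surjectivity and smoothness steps a bit more explicitly than the paper does.
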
 

\begin{remark} Theorems~\ref{thm:intro-pre-desingularization} and 
\ref{thm:intro-desingularization} of Section~\ref{s:intro} are immediate
consequences of part a).
\end{remark} 

\begin{proof} Part a) is an immediate consequence of part b). 
To prove b), we note that the domain of $\pi_C$ is smooth by part b) of 
Lemma~\ref{lemma:isom-on-bistables}. The map $\pi_C$ is proper since 
its domain is projective. It induces the isomorphism between dense open sets 
by part a) of Lemma~\ref{lemma:isom-on-bistables}.
\end{proof}

Generalizing remark~7.8 of \cite{CerulliFeiginReineke12a} we conjecture that 
$\overline{\cm^{bs}(v, w)^{\Gr}}$ equals the whole Grassmannian 
$\Gr_{(v,w)}(K_{LR} M)$. 
If this is true, we have an easy description of the fibres
using the next theorem.

\begin{theorem} \label{thm:fibre-of-piGr}
The fibre of $\pi_{v,w}: \Gr_{(v,w)}(K_{LR} M) \to \Gr_w(M)$ over a submodule $U\subset M$ is isomorphic
to the quiver Grassmannian of 
submodules
of dimension $(v,w) - \dimv K_{LR}(U)$ of the module
$K_{LR}(M)/K_{LR}(U)$.
\end{theorem}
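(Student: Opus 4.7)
The plan is to identify the fibre with those submodules of $K_{LR}(M)$ which contain $K_{LR}(U)$, and then apply the standard isomorphism of quiver Grassmannians attached to a quotient.

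The key step is to show that every submodule $L\subset K_{LR}(M)$ of dimension $(v,w)$ with $\res(L)=U$ automatically contains $K_{LR}(U)$. For this, I would invoke the naturality of the counit $\phi: K_L\res\to\id$ of the adjunction $K_L\dashv\res$, applied to the inclusion $L\hookrightarrow K_{LR}(M)$, yielding a commutative square
\[
\xymatrix{
K_L(U) \ar[r]^-{\phi_L} \ar[d] & L \ar@{^{(}->}[d] \\
K_L(M) \ar[r] & K_{LR}(M)
}
\]
in which the bottom arrow is the canonical epimorphism from $K_L(M)$ onto $K_{LR}(M)\subset K_R(M)$. By functoriality the image of $K_L(U)$ along the bottom row equals the image of the composite $K_L(U)\to K_R(U)\to K_R(M)$; since $K_R$ is left exact, $K_R(U)\to K_R(M)$ is mono, and so this image is exactly $K_{LR}(U)$. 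Commutativity of the square then forces $K_{LR}(U)\subset L$.

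Once this containment is established, the converse is immediate: if $K_{LR}(U)\subset L\subset K_{LR}(M)$ with $\dim L=(v,w)$, then $\res(L)$ contains $\res(K_{LR}(U))=U$ and has dimension $w=\dim U$, hence $\res(L)=U$. Consequently, the fibre $\pi_{v,w}^{-1}(U)$ coincides with the locally closed subvariety of $\Gr_{(v,w)}(K_{LR}(M))$ of submodules containing $K_{LR}(U)$.

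To conclude, I would appeal to the standard fact that, for any $\cR_C$-module $N$ and submodule $N'\subset N$, the map $L\mapsto L/N'$ is an isomorphism of algebraic varieties between $\{L\subset N : N'\subset L,\ \dim L=d\}$ and $\Gr_{d-\dimv N'}(N/N')$. Applied to $N=K_{LR}(M)$, $N'=K_{LR}(U)$ and $d=(v,w)$, this produces the desired isomorphism with $\Gr_{(v,w)-\dimv K_{LR}(U)}(K_{LR}(M)/K_{LR}(U))$. The main obstacle is the first step: one must correctly extract the identification of the image of $K_L(U)\to K_{LR}(M)$ with $K_{LR}(U)$ from the naturality square and from the left exactness of $K_R$; the remaining steps are purely formal.
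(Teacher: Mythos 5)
Your proposal is correct, and the overall architecture matches the paper's: identify the fibre with submodules of $K_{LR}(M)$ containing $K_{LR}(U)$, then apply the standard bijection with submodules of the quotient. The only point where you diverge is in proving the inclusion $K_{LR}(U)\subset L$ for $L\subset K_{LR}(M)$ with $\res(L)=U$. The paper argues via stability: since $L$ is a submodule of the stable module $K_{LR}(M)$, it is stable, so the unit $L\to K_R\res(L)$ is mono, and hence the image of the counit $K_L\res(L)\to L$ (which is contained in $L$) is exactly the image of $\can_{\res(L)}$, i.e.\ $K_{LR}(U)$. You instead chase the naturality square of the counit applied to the inclusion $L\hookrightarrow K_{LR}(M)$, then use the naturality of $\can$ and the left exactness of $K_R$ to see that the image of $K_L(U)$ inside $K_{LR}(M)$ is $K_{LR}(U)$. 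Both arguments turn on the same underlying point — a canonical map into a right Kan extension is a monomorphism — but you use monicity of $K_R(U)\to K_R(M)$ directly, whereas the paper uses monicity of the unit of $L$ obtained from stability. Either route is fine; yours avoids invoking the stability characterization of Lemma~\ref{lemma:characterization-of-stables} and the commutative square of Lemma~\ref{lemma:commutative-square} explicitly, at the cost of a slightly longer diagram chase.
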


\begin{proof} The claim is equivalent to the statement that
the fibre is isomorphic to the variety of submodules $V\subset K_{LR}(M)$ containing
$K_{LR}(U)$  and such that $\dimv V=(v,w)$.

By the definition of $\pi_{v,w}$, this is equivalent
to the following statement: Suppose
that $V\subset K_{LR}(M)$ is a submodule of dimension vector
$(v,w)$. Then the restriction $\res(V)$ equals $U$ if
and only if we have $K_{LR}(U) \subset V$.

Indeed, if we have $K_{LR}(U)\subset V$, then we have
\[
U = \res K_{LR}(U) \subset \res(V)
\]
and 
\[
\dimv U =  \dimv \res(V).
\]
Hence we have $U=\res(V)$ as claimed.

Conversely, suppose that we have $\res(V)=U$. By assumption, we
have $V\subset K_{LR}(M)$. Since $K_{LR}(M)$ is stable, so is $V$. Thus,
we have $K_{LR}(\res(V)) \subset V$. But we also have $K_{LR}(V)= K_{LR}(U)$.
Thus, we have $K_{LR}(U)\subset V$ as claimed.
\end{proof}

\subsection{Example of a non rigid intermediate extension}
\label{ss:non-rigid-inter-ext}
In section~\ref{ss:proof-of-desingularization}, to prove that $\pi_{\Gr}$ is 
indeed a desingularization, we made the assumption 
that the intermediate extension $K_{LR} (M)$ is rigid.
We gave some sufficient conditions for this to hold in  
Lemma~\ref{lemma:Auslander-category}.
Let us show by an example that $K_{LR}(M)$ is not always rigid.

Let $Q$ be some orientation of $A_3$ and let $w$ be a dimension vector which takes 
the value $1$ in the marked boxes and zero everywhere else in the quiver of 
$\cR$ associated with $Q$:
\[ \xymatrix{  \cdots
&   \ar[rd] \ar[r] & \boxed{} \ar[r] &  \ar[rd] \ar[r] & \boxed{} \ar[r]& \\
\ar[r] \ar[ru] \ar[rd] & \boxed{\bt } \ar[r]  &  \ar[r] \ar[dr]  \ar[ru] & \boxed{\bt } \ar[r] & \ar[r] \ar[dr] \ar[ru] & \boxed{\bt }  \\
\cdots & \ar[ru] \ar[r] & \boxed{ } \ar[r] & \ar[ru]  \ar[r] & \boxed{ } \ar[r]&
}
\]
By Theorem~2.4 of \cite{KellerScherotzke13a}, the modules in $\cs$ 
with dimension vector 
$w$ are given by the representations with dimension 
vector $(1,1,1)$ of the quiver  
$$ 
\xymatrix{ 
\bt \ar@/^2ex/[rr]  \ar[r] & \bt  \ar[r] & \bt
}
$$
The following module $M \in \mod \cs$  is not rigid and has dimension vector $w$:
$$ 
\xymatrix{ 
k  & k \ar[l]_{\id}  & k.\ar[l]_{\id} \ar@/_2pc/[ll]_{\id}
}
$$
Its intermediate extension $K_{LR} M$ is  the $\cR_C$-module given by
\[ \xymatrix{  
&  & k\ar[ld]_{\id} && \\
k  &k \ar[l]|{\id} & k \ar[l]|{\id} & k\ar[l]|{\id} \ar[lu]_{\id} \ar[ld]^{\id}  & k. \ar[l]|{\id}   \\
& & k\ar[lu]^{-2 \id}   & & 
}\]
Using Corollary~3.6 of \cite{KellerScherotzke13a} it is easy to see that the space
$\Ext^1(S_x,K_{LR} (M))$ vanishes for all non-frozen vertices $x\in \cR_0-\cs_0$ 
except for the vertex $z$:
\[ 
\xymatrix{  \cdots
&   \ar[rd] \ar[r] & \boxed{} \ar[r] &  \ar[rd] \ar[r] & \boxed{} \ar[r]& \\
\ar[r] \ar[ru] \ar[rd] & \boxed{\bt } \ar[r]  &  \ar[r] \ar[dr]  \ar[ru] & \boxed{\bt } \ar[r] & z \ar[r] \ar[dr] \ar[ru] & \boxed{\bt }  \\
\cdots & \ar[ru] \ar[r] & \boxed{ } \ar[r] & \ar[ru]  \ar[r] & \boxed{ } \ar[r]&
}
\]

Hence, by Lemma~4.13 of \cite{KellerScherotzke13a}, the cokernel of $K_{LR} M \hookrightarrow K_R M$ is represented as a module
of $\cd_Q \cong \cR/\langle \cs \rangle $ by $z$, i.e. it is the 
injective module $z_{\cd}^{\vee}$ defined by
$$z_{\cd}^{\vee}= D\Hom_{\cd_Q}(z, -),$$ 
where we identify the vertex $z$ with its image in $\cd_Q$ under the Happel functor.
Now using the projective resolution 
of $z_{\cd}^{\vee}$ from Theorem~3.7 of \cite{KellerScherotzke13a}, we have 
$$\Ext^2_{\cR}(z_{\cd}^{\vee},z_{\cd}^{\vee})\cong \Hom((\Sigma^{-1} z)^{\vee} , z_{\cd}^{\vee}) \cong D\Hom_{\cd_Q}(z, \Sigma^{-1} z)$$
which vanishes, since $z$ is an indecomposable object of $\cd_Q$.
So using part c) of Lemma~\ref{lemma:necessary-condition-rigidity}, we deduce that 
$K_{LR} (M)$ is not rigid, since $M$ is not rigid. 

\section{Quiver Grassmannians over repetitive algebras}
\label{s:quiver-grassmannians-over-repetitive-algebras}

\subsection{Repetitive algebras} \label{ss:repetitive-algebras}
Let $A$ be a finite-dimensional $k$-algebra and $DA=\Hom_k(A,k)$ the
bimodule dual to $A$. Let $T(A)$ be the {\em trivial extension}, i.e. the
algebra $A\oplus DA$ with the multiplication
\[
(a,f)(b,g)=(ab, ag+fb), a,b\in A, \;\; f,g \in DA.
\]
We endow $T(A)$ with the $\N$-grading such that $T(A)^0=A$, $T(A)^1=DA$
and $T(A)^p=0$ for all $p\geq 2$. A $\Z$-graded module $M$ over $T(A)$ is given
by a sequence $M^p$, $p\in\Z$, of $A$-modules and $A$-linear maps
\[
m^p: \nu(M^p) \to M^{p+1} \ko p\in\Z\ko
\]
where $\nu(L)=L\ten_A DA$, such that $m^{p+1}\circ \nu(m^p)=0$ for all
$p\in\Z$. Equivalently, such a module may be interpreted as a module
over the {\em repetitive algebra}, which is a suitably defined (locally unital)
infinite matrix algebra, cf. section~10 of \cite{Happel87}. Let $\grm(T(A))$
be the category of $\Z$-graded $T(A)$-modules of finite dimension over $A$
with morphisms the homogeneous $T(A)$-linear maps of degree $0$.
Let $\cp\subset \grm(T(A))$ be the full subcategory of the projective
graded modules. We have a canonical equivalence
\[
\grm(T(A)) \iso \mod(\cp)
\]
taking a module $M$ to the restriction of $\Hom(?,M)$ to $\cp$ 
and the category $\cp$ fits into the setup of section~\ref{ss:Auslander-category}:
it is coherent since its projectives are of finite dimension and the finitely
presented $\cp$-modules coincide with the finite-dimensional $\cp$-modules.

For a graded $T(A)$-module $M$, let $M\langle 1\rangle$ denote the
shifted module defined by
\[
M\langle 1\rangle^p = M^{p+1}, m^p_{M\langle 1\rangle}= m_M^{p+1}.
\]
For a graded left $T(A)$-module $M$, let $DM$ denote the $k$-dual right
module with $(DM)^p= D(M^{-p})$, $p\in \Z$. We have a canonical isomorphism
of $A$-modules
\[
DT(A) \iso (T(A))\langle 1 \rangle.
\]
This shows that the projectives coincide with the injectives in
$\mod(\cp)$, which is therefore a Frobenius category.
Thus, the associated stable category $\ul{\mod}(\cp)$ (the quotient
of $\grm T(A)$ by the ideal of morphisms factoring through a projective)
is canonically triangulated, cf.~\cite{Happel87}. By a theorem of
Happel \cite{Happel87}, the canonical embedding
\[
\mod A \to \ul{\mod}(\cp)
\]
taking an $A$-module to the corresponding graded $T(A)$-module concentrated
in degree $0$ extends to a fully faithful triangle functor
\begin{equation} \label{eq:derived-eq-stable}
\cd^b(\mod A) \to \ul{\mod}(\cp)
\end{equation}
and this functor is an equivalence if and only if $A$ is of finite global dimension.

\subsection{The case of iterated tilted algebras of Dynkin type}
\label{ss:piecewise-hereditary}
From now on, let us suppose that $A$ is derived equivalent to the path
algebra $kQ$ for a Dynkin quiver $Q$ with underlying graph $\Delta$
(for example, we can take $A=kQ$ or $A$ a tilted algebra of type $Q$,
cf.~\cite{Happel87} \cite{Keller07a}). For a Krull--Schmidt category
$\cc$, let us denote by $\ind(\cc)$ the full subcategory whose objects form
a set of representatives for the isomorphism classes of the indecomposable
objects of $\cc$. By combining Happel's theorem~\ref{thm:Happel}
with the equivalence~(\ref{eq:derived-eq-stable}), we obtain
an isomorphism
\begin{equation}
\label{eq:isom-kZQ-ind-stable}
\xymatrix{ 
k(\Z Q) \ar[r]^-{\sim} & \ind(\ul{\mod}(\cp)).
}
\end{equation}
Let $s_i$, $i\in Q_0$, denote the vertices of $\Z Q$ corresponding to the
simple $A$-modules $S_i$ (considered as $T(A)$-modules concentrated
in degree $0$). Let $h$ be the Coxeter number of $\Delta$. Let $C$ be the
set of the following vertices of $\Z Q$:
\begin{equation} \label{eq:vertices-in-C}
\tau^{p(h-1)} \tau^{-1}\Sigma^{-1} s_i \ko p\in\Z \ko i\in Q_0.
\end{equation}

\begin{proposition} \label{prop:mesh-category-isomorphisms}
The isomorphism~(\ref{eq:isom-kZQ-ind-stable}) lifts to an isomorphism
\begin{equation}
\xymatrix{
\cR_C \ar[r]^-\sim & \ind(\mod(\cp))
}
\end{equation}
taking $s_i$ to the simple module $S_i$, $i\in Q_0$. It induces an
isomorphism
\begin{equation}
\xymatrix{
\cs_C \ar[r]^-\sim & \ind(\cp).
}
\end{equation}
\end{proposition}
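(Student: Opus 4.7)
The plan is to combine Happel's equivalences with the AR-theoretic construction of the Nakajima category, extending the stable-category isomorphism~(\ref{eq:isom-kZQ-ind-stable}) by attaching, at each frozen vertex $\sigma^{-1}(c)$ with $c\in C$, an indecomposable projective of $\mod(\cp)$.

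First, I would note that by section~\ref{ss:repetitive-algebras} the category $\mod(\cp) = \grm(T(A))$ is a $\Hom$-finite Frobenius exact Krull--Schmidt category, and it is standard in the sense of Ringel because under Happel's equivalence~(\ref{eq:derived-eq-stable}) its stable AR-relations coincide with the mesh relations of $k(\Z Q)$ by Theorem~\ref{thm:Happel}. Thus the principle recorded in the paragraph after Assumption~\ref{main-assumption} applies: defining $C'\subset (\Z Q)_0$ as the set of vertices $c$ for which $\sigma^{-1}(c)$ corresponds to an indecomposable projective of $\mod(\cp)$, the sequences~(\ref{eq:left-exact-sequences}) at each non-frozen vertex are the AR-conflations of $\mod(\cp)$ and in particular exact. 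The isomorphism~(\ref{eq:isom-kZQ-ind-stable}) then lifts to a functor $\cR_{C'} \to \ind(\mod(\cp))$ sending each frozen vertex $\sigma^{-1}(c')$ to the corresponding indecomposable projective. Fullness and faithfulness follow from standardness (the mesh relations in $\cR_{C'}$ at non-frozen vertices match the AR-relations of $\mod(\cp)$), and essential surjectivity holds since every indecomposable of $\mod(\cp)$ is either non-projective (hit via~(\ref{eq:isom-kZQ-ind-stable})) or projective (hit via the frozen vertices). Restricting to frozen vertices yields $\cs_{C'} \iso \ind(\cp)$.

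Second, the remaining task is to verify $C'=C$ where $C$ is defined by~(\ref{eq:vertices-in-C}). This amounts to computing, in the AR quiver $\Z\tilde{Q}$ of $\mod(\cp)$, the position of each indecomposable projective $P_i\langle p\rangle$. This splits into: (a) identifying the grading shift $\langle 1\rangle$ on $\grm(T(A))$ with an explicit auto-equivalence of $\cd_Q$ via the composite of~(\ref{eq:derived-eq-stable}) and Happel's mesh-category theorem; (b) locating $P_i = P_i\langle 0\rangle$ at the position $\tau^{-1}\Sigma^{-1}s_i$. For (a) I would use that $\langle 1\rangle$ corresponds to the Serre (Nakayama) functor $\nu = \Sigma\tau$ on $\cd_Q$, together with the fractional Calabi--Yau relation $\tau^h\simeq\Sigma^{-2}$ for Dynkin $Q$, to conclude that $\langle 1\rangle$ acts as $\tau^{h-1}$ on the indexing set $\Z Q$ of indecomposables. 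For (b), the projective cover of $S_i\langle 0\rangle$ is $P_i\langle 0\rangle$, and inspecting the AR-conflation of $S_i\langle 0\rangle$ in $\mod(\cp)$ places $P_i\langle 0\rangle$ at the frozen vertex above $\tau^{-1}\Sigma^{-1}s_i$. Combining, $P_i\langle p\rangle$ sits above $\tau^{p(h-1)}\tau^{-1}\Sigma^{-1}s_i$, which is exactly the description of $C$ in~(\ref{eq:vertices-in-C}), so $C'=C$.

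The main obstacle is the precise computation in part (a), namely translating the grading shift $\langle 1\rangle$ on $\grm(T(A))$ into an explicit power of $\tau$ on the level of indecomposables; this requires care with the interactions of $\Sigma$, $\tau$, and $\nu$ in the Dynkin derived category. The remaining steps are organizational and reduce to standard AR theory together with Happel's theorem.
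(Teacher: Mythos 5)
Your overall strategy is the same as the paper's: appeal to standardness of $\cp$ (the paper cites Bautista--Gabriel--Roiter--Salmer\'on; you invoke Ringel's notion via the paragraph after Assumption~\ref{main-assumption}, which is equivalent here), use $\rad(P_M)\cong\Sigma^{-1}M$ to place each indecomposable projective in the mesh ending at $\tau^{-1}\Sigma^{-1}v(M)$, and then pin down the positions of all $P_i\langle p\rangle$ by understanding the grading shift $\langle 1\rangle$ as a power of $\tau$. Your reorganization into ``define $C'$, then verify $C'=C$'' is a cosmetic variant of the paper's exposition.

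However, step (a) of your argument contains a genuine error. You assert that $\langle 1\rangle$ corresponds to the Serre functor $S=\Sigma\tau$ on $\cd_Q$, and that this, together with $\tau^h\simeq\Sigma^{-2}$, yields $\langle 1\rangle\simeq\tau^{h-1}$. Neither half of this is right. First, $\langle 1\rangle$ is the Nakayama functor $\nu_{T(A)}=?\otimes_{T(A)}DT(A)$ of the repetitive algebra, which under Happel's equivalence corresponds to $S\Sigma$ (not $S$): for a Frobenius category, the Serre functor of the stable category is $\Sigma^{-1}\nu_{T(A)}$, so $\nu_{T(A)}=S\Sigma$. Second, even granting your (incorrect) identification $\langle 1\rangle=S=\Sigma\tau$, you cannot deduce $\tau^{h-1}$: the relation $\Sigma^2=\tau^{-h}$ does not express $\Sigma$ itself as a power of $\tau$ (for Dynkin types there may also be a diagram automorphism), so $\Sigma\tau$ is not a power of $\tau$. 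The correct computation is the paper's: $\langle 1\rangle=S\Sigma=\tau\Sigma^2=\tau\cdot\tau^{-h}=\tau^{1-h}=\tau^{-(h-1)}$. Your stated power $\tau^{h-1}$ happens to be the inverse, and since $p$ ranges over all of $\Z$ in~(\ref{eq:vertices-in-C}) the set $C$ comes out the same, so your final conclusion is not damaged --- but the identification of $\langle 1\rangle$ with the Serre functor and the claimed derivation of $\tau^{h-1}$ from it should be corrected.
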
 

\begin{proof} The category $\cp$ is locally bounded and locally
representation-finite. Moreover, it is directed. It follows from 
\cite{BautistaGabrielRoiterSalmeron85} that $\cp$ is standard,
i.e. the category $\ind(\mod(\cp))$ is isomorphic to the mesh category
of the Auslander--Reiten quiver $\Gamma_{\mod(\cp)}$ (with the mesh relations
associated with the non projective vertices), cf.~\cite{Riedtmann85}.
It remains to be checked that the Auslander--Reiten quiver $\Gamma_{\mod(\cp)}$
is indeed obtained from $\Z Q$ by adding a new vertex $\sigma(x)$ and new 
arrows $\tau(x)\to\sigma(x)\to x$ to $\Z Q$ for each vertex $x$ in $C$.  
Indeed, the quiver $\Z Q$ is isomorphic to the stable Auslander--Reiten quiver 
$\Gamma_{\ul{\mod}(\cp)}$ via the isomorphism induced 
by~(\ref{eq:isom-kZQ-ind-stable}),  and we know that we have to insert the
vertex $v(P)$ corresponding to an indecomposable projective $P$
in the mesh starting at $v(\rad(P))$. If $P_M$ is the projective cover
of a simple module $M$, we have the exact sequence
\[
0 \to \rad(P_M) \to P_M \to M \to 0 \ko
\]
which shows that $\rad(P_M)=\Sigma^{-1} M$ in the triangulated
category $\ul{\mod}(\cp)$. Thus, we have to insert $v(P_M)$ in the
mesh {\em ending} at $\tau^{-1} \Sigma^{-1} v(M)$. Now the simple
$\cp$-modules are of the form $S_i\langle p\rangle$, $i\in Q_0$,
$p\in \Z$. It is well-known and not hard to check that the shift
$\langle 1 \rangle$ induces the composition $S \Sigma$ in the
stable category $\ul{\mod}{(\cp)}$ (equivalent to $\cd_Q$), where
$S$ is the Serre functor. Now we have
\[
S \Sigma = \tau \Sigma \Sigma = \tau \Sigma^2 = \tau \tau^{-h} = \tau^{-(h-1)}.
\]
Here, the isomorphism $\Sigma^2=\tau^{-h}$ follows
from Happel's theorem~\ref{thm:Happel} and from
Gabriel's description of the Serre functor (alias Nakayama functor)
in Proposition~6.5 of \cite{Gabriel80}. A detailed proof of a more
precise statement is given by Miyachi-Yekutieli in Theorem~4.1 of \cite{MiyachiYekutieli01}. Thus, we get 
$\tau^{-1}\Sigma^{-1} S_i\langle -p\rangle = \tau^{p(h-1)} \tau^{-1} \Sigma^{-1} S_i$, 
which proves the claim. This construction of $C$ also
makes the second assertion clear.
\end{proof}

Let $C\subset \Z Q_0$ be as in the Proposition. It satisfies 
Assumption~\ref{main-assumption} by the remark following
the statement of the assumption.
Let $M \in \mod(\cp)$ be a finite-dimensional module 
(i.e. a finite-dimensional module over the repetitive algebra).
The proposition shows that we may consider $M$ as
a module over the singular Nakajima category $\cs_C$,
and that we can consider its intermediate extension
\[
K_{LR}(M) \in \mod(\mod(\cp))
\]
as a module over the regular Nakajima category $\cR_C$.
By part c) of Lemma~\ref{lemma:Auslander-category}, 
the intermediate extension $K_{LR}(M)$
is rigid, since each projective in $\mod(\cp)$ is also injective.
Thus, from Theorem~\ref{thm:desingularization}, 
we obtain the following Corollary.

\begin{corollary} \label{cor:desing-quiver-grassmannian}
The map $\pi^{bs}$ of 
Theorem~\ref{thm:desingularization}
provides a desingularization of the quiver Grassmannian of $M$.
\end{corollary}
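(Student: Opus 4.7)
The plan is to deduce the corollary directly from Theorem~\ref{thm:desingularization} by checking its only non-trivial hypothesis, namely the rigidity of the intermediate extension $K_{LR}(M)$.

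First, I would use Proposition~\ref{prop:mesh-category-isomorphisms} to transfer the problem from the language of the repetitive algebra to the language of the Nakajima categories. Concretely, the isomorphism $\cs_C \iso \ind(\cp)$ lets me view the finite-dimensional $\cp$-module $M$ as a finite-dimensional $\cs_C$-module, and the isomorphism $\cR_C \iso \ind(\mod(\cp))$ identifies $\Mod(\cR_C)$ with $\Mod(\mod(\cp))$. Under these identifications, the restriction functor $\res : \Mod(\cR_C) \to \Mod(\cs_C)$ of section~\ref{ss:configurations} corresponds to the restriction functor along the Yoneda embedding $\cp \to \mod(\cp)$ studied in section~\ref{ss:Auslander-category}. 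Hence the two meanings of $K_{LR}$ coincide, and the rigidity statement to be verified is exactly a statement about the intermediate extension of the Auslander category.

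Next, I would verify rigidity via part c) of Lemma~\ref{lemma:Auslander-category}. The key observation is that, as recalled in section~\ref{ss:repetitive-algebras}, the category $\mod(\cp) \cong \grm(T(A))$ is a Frobenius category, so projectives and injectives coincide. Picking any projective cover $P_0 \to M$ in $\mod(\cp)$, the module $P_0$ is therefore automatically injective, and Lemma~\ref{lemma:Auslander-category} c) immediately delivers that $K_{LR}(M)$ is rigid.

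With the hypothesis verified, Theorem~\ref{thm:desingularization} a) applies for every dimension vector $w \leq \dimv M$ and produces the desingularization map $\pi^{bs}$ of $\Gr_w(M)$, which is the content of the corollary. I do not expect a genuine obstacle: the translation step is a routine bookkeeping once Proposition~\ref{prop:mesh-category-isomorphisms} is in hand, and the rigidity verification is a single appeal to the Frobenius property of the repetitive algebra via Lemma~\ref{lemma:Auslander-category} c). The conceptual work has already been done in sections~\ref{ss:Auslander-category} and~\ref{ss:piecewise-hereditary}.
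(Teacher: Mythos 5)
Your proposal is correct and reproduces the paper's own argument: the paper also verifies the rigidity hypothesis of Theorem~\ref{thm:desingularization} by invoking part c) of Lemma~\ref{lemma:Auslander-category} together with the Frobenius property of $\mod(\cp)$ (projectives coincide with injectives), after transporting $M$ and $K_{LR}(M)$ through the isomorphisms of Proposition~\ref{prop:mesh-category-isomorphisms}. The one small point the paper records explicitly and you leave implicit is that the configuration $C$ of~(\ref{eq:vertices-in-C}) satisfies Assumption~\ref{main-assumption}, which is needed before Theorem~\ref{thm:desingularization} can even be invoked.
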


\subsection{Link to Cerulli--Feigin--Reineke's desingularization}
\label{ss:link-to-CFRs-desingularization}
In their article \cite{CerulliFeiginReineke12a},
Cerulli--Feigin--Reineke have constructed desingularizations of
quiver Grassmannians of representations of Dynkin quivers. 
We will show how their construction fits into the framework of 
desingularizations of quiver Grassmannians of modules over
repetitive algebras of 
section~\ref{ss:piecewise-hereditary}.

Let $Q$ be a connected Dynkin quiver and $A$ the path algebra of $Q$.
Following section~4 of \cite{CerulliFeiginReineke12a}, we define $\ch_Q$ 
to be the full subcategory of the category of morphisms of $\mod(A)$
whose objects are the injective morphisms
\[
f: P_1 \to P_0
\]
such that
\begin{itemize}
\item[1)] $P_0$ and $P_1$ are finitely generated projective $A$-modules and
\item[2)] $f$ does not admit a non zero direct factor of the form $0 \to P$, where
$P$ is a finitely generated projective $A$-module.
\end{itemize}
We define a commutative square of functors
\[
\xymatrix{ \ch_Q \ar[r] & \mod(\cp)=\grm(T(A)) \\
\proj(A) \ar[r] \ar[u] & \cp \ar[u]}
\]
as follows:
\begin{itemize}
\item[-] the functor $\proj(A) \to \ch_Q$ takes a module $P$ to the identity
$P \to P$, 
\item[-] the functor $\cp \to \mod(\cp)$ is the Yoneda embedding, 
\item[-] the functor $\ch_Q \to \grm(T(A))$ takes a morphism $f: P_1 \to P_0$ 
to the graded module $P_1 \leadsto \nu(P_0)$ which has $P_1$ in degree $0$,
$\nu(P_0)$ in degree $1$, $\nu(f): \nu(P_1) \to \nu(P_0)$ as the structural
morphism and all other components equal to zero, 
\item[-] the functor $\proj(A) \to \cp$
takes a module $P$ to the graded $T(A)$-module $(P \leadsto \nu (P))$,
where $P$ in degree $0$ is linked to $\nu (P)$ in degree $1$ by the
identity map $\nu(P) \to \nu(P)$ and all other components vanish. 
\end{itemize}
Notice that all four functors are fully faithful.
Using the horizontal functors, we can restrict a $\cp$-module to
$\proj(A)$ and a $\mod(\cp)$-module to $\ch_Q$. In this way,
the category $\mod(A)$ identifies with the subcategory of $\mod(\cp)$
formed by the modules supported on $\proj(A)$ and the category
$\mod(\ch_Q)$ with the full subcategory of $\mod(\mod(\cp))$ formed
by the modules supported on $\ch_Q$. 

Following section~5 of \cite{CerulliFeiginReineke12a}, for an $A$-module
$M$, we define the $\ch_Q$-module $\hat{M}$ by
\[
\hat{M}(P_1 \to P_0) = \im(\Hom(P_0, M) \to \Hom(P_1,M)).
\]
The following proposition shows that the functor 
$\Lambda: M \mapsto \hat{M}$ of [loc. cit.] is a particular
case of the intermediate extension $M \mapsto K_{LR}(M)$.
Therefore, Corollary~\ref{cor:desing-quiver-grassmannian} generalizes
Corollary~7.7 of \cite{CerulliFeiginReineke12a}.

\begin{proposition} 
Let $M$ be an $A$-module identified with a $\cp$-module
supported on $\proj(A) \subset \cp$. Then the $\mod(\cp)$-module $K_{LR}(M)$
is supported on $\ch_Q$ and its restriction to $\ch_Q$ is canonically
isomorphic to $\hat{M}$.
\end{proposition}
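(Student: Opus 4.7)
The plan is to apply part~c) of Lemma~\ref{lemma:Auslander-category}, which expresses $K_{LR}(M)$ as the image of the Yoneda-induced map $Q_\bullet^\wedge \to M^\wedge$ in $\mod(\mod(\cp))$, where $Q_\bullet \to M^{\cp}$ is a projective cover in $\mod(\cp) = \grm(T(A))$. First I identify this cover: since $M$, viewed as a graded $T(A)$-module concentrated in degree~$0$, has vanishing $DA$-action, the cover is $Q_\bullet = (P \leadsto \nu(P))$ where $p: P \to M$ is an $A$-projective cover. The kernel $\Omega M = (\ker p \leadsto \nu(P))$ has structural map $\nu(\iota)$ where $\iota: \ker p \hookrightarrow P$. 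Since $A$ is hereditary, $\ker p$ is projective, so $\Omega M$ lies in the essential image of $\ch_Q \to \grm(T(A))$ whenever $M$ has no projective direct summand; the general case reduces to this one by additivity of both $K_{LR}$ and $\hat{M}$.

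Consequently, for any $N \in \mod(\cp)$ with associated graded module $\tilde{N}$,
\[
K_{LR}(M)(N) = \im\bigl(\Hom_{\grm(T(A))}(\tilde{N}, Q_\bullet) \to \Hom_{\grm(T(A))}(\tilde{N}, M)\bigr),
\]
the map being induced by $p: Q_\bullet \to M$. I now specialize to $\tilde{N} = N_f = (P_1 \leadsto \nu(P_0))$ for an object $f: P_1 \to P_0$ of $\ch_Q$. The target $\Hom_{\grm(T(A))}(N_f, M)$ is canonically $\Hom_A(P_1, M)$, since any morphism $N_f \to M$ has vanishing degree-$1$ component and trivial compatibility condition ($M^1 = 0$ and the structural map of $M$ is zero). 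For the source, a morphism $(\phi^0, \phi^1): N_f \to Q_\bullet$ consists of $A$-linear maps $\phi^0: P_1 \to P$ and $\phi^1: \nu(P_0) \to \nu(P)$ with $\phi^1 \circ \nu(f) = \nu(\phi^0)$. The crucial observation is that the Nakayama functor restricts to an equivalence $\nu: \proj(A) \iso \inj(A)$; hence $\phi^1 = \nu(\psi)$ for a unique $\psi: P_0 \to P$, and faithfulness of $\nu$ on projectives reduces the compatibility to $\phi^0 = \psi \circ f$. This produces a bijection $\Hom_{\grm(T(A))}(N_f, Q_\bullet) \cong \Hom_A(P_0, P)$ via $\psi \mapsto (\psi \circ f, \nu(\psi))$.

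Under these identifications the map induced by $p$ becomes $\psi \mapsto p \circ \psi \circ f$, factoring as $\Hom_A(P_0, P) \to \Hom_A(P_0, M) \to \Hom_A(P_1, M)$; projectivity of $P_0$ makes the first arrow surjective, so the image equals that of the second, namely $\hat{M}(f)$. Naturality in $f$ follows routinely, giving the desired canonical natural isomorphism $K_{LR}(M)|_{\ch_Q} \cong \hat{M}$.

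For the support claim, I would verify that $K_{LR}(M)(N) = 0$ whenever $N$ lies outside the essential image of $\ch_Q \to \mod(\cp)$. A degree-wise analysis of the compatibility conditions, again leveraging the Nakayama equivalence on projectives, shows that non-trivial components of $\tilde{N}$ in degrees outside $\{0, 1\}$, non-projective parts of $\tilde{N}^0$, or structural maps not of the form $\nu(f)$ for an injective $f$ in $\ch_Q$ each force the image of $\Hom(\tilde{N}, Q_\bullet) \to \Hom(\tilde{N}, M)$ to vanish. Organizing this case analysis cleanly is the main technical obstacle, but each case reduces systematically to the source-and-compatibility analysis already carried out for the $\ch_Q$-restriction.
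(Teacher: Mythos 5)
Your computation of $K_{LR}(M)\vert_{\ch_Q}\cong\hat M$ is correct and takes essentially the paper's route: both of you express $K_{LR}(M)$ as the image of $Q_\bullet^\wedge\to M^\wedge$ with $Q_\bullet=(P_0^M\leadsto\nu(P_0^M))$ the projective cover of $M$ in $\grm(T(A))$, and then evaluate at objects $(P_1\leadsto\nu(P_0))$ of $\ch_Q$. Your explicit identification $\Hom_{\grm T(A)}(N_f,Q_\bullet)\cong\Hom_A(P_0,P)$ via the Nakayama equivalence $\nu\colon\proj(A)\iso\inj(A)$ and its faithfulness is a clean way of seeing what the paper states more compactly; the factorization $\psi\mapsto p\psi f$ through $\Hom_A(P_0,M)$ and surjectivity of $p_*$ by projectivity of $P_0$ then give $\hat M(f)$ exactly as in the paper.

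The support claim, however, is a genuine gap. You gesture at a degree-wise case analysis and assert that it ``reduces systematically to the source-and-compatibility analysis already carried out,'' but this is misleading: for objects of $\ch_Q$ that analysis produces a \emph{nonzero} Hom group, whereas for the remaining objects one must show the image \emph{vanishes}, and the required arguments differ by case. The paper gets a clean handle on this by invoking the classification of indecomposables of $\mod(\cp)$ (for $Q$ Dynkin, the repetitive algebra is representation-finite, so the indecomposables are exactly the projective-injectives plus $\Omega^p(L)$ for $p\in\Z$, $L$ an indecomposable $A$-module) and checking each family: for $L\leadsto 0$ one shows $\Hom(L\leadsto 0,\,Q_\bullet)=0$ using that the image of any $\phi^0\colon L\to P_0^M$ is projective (submodules of projectives over a hereditary algebra are projective) and that $\nu$ is faithful there; for $\Omega^p(L)$ with $|p|\ge 2$ the degree-$0$ component vanishes so there are no nonzero maps to $M$; $\Omega^{-1}(L)$ uses the minimal injective coresolution; and the only projective-injectives admitting a nonzero map to $M$ are the unshifted $P\leadsto\nu(P)$, which lie in $\ch_Q$. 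Your ad hoc taxonomy (``non-projective parts of $\tilde N^0$,'' ``structural maps not of the form $\nu(f)$'') is roughly in the right spirit but imprecise, and without invoking the classification the case analysis is not actually organized. This part needs to be carried out to constitute a proof.

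A minor point: the reduction to $M$ with no projective direct summand is unnecessary. Whether or not $\Omega M$ lies in the essential image of $\ch_Q$ plays no role in your computation of $K_{LR}(M)(N_f)$; all that is used is that $Q_\bullet\to M$ is a surjection from a projective. You can drop that paragraph.
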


\begin{proof} Let $M^\wedge: \mod(\cp) \to \mod(k)$ be the functor
represented by $M$. We have $\res(M^\wedge)=M$. Thus, by part c) of
Lemma~5.4 of \cite{KellerScherotzke13a}, 
the module $K_{LR}(M)= K_{LR}(\res(M^\wedge))$
is the submodule of $M^\wedge$ generated by the images of all morphisms
$P^\wedge \to M^\wedge$, where $P$ belongs to $\cp$. Let us check that
the restriction of $K_{LR}(M)$ to $\ch_Q$ is isomorphic to $\hat{M}$.
Let
\[
\xymatrix{ 0 \ar[r] & P_1^M \ar[r] & P_0^M \ar[r] & M \ar[r] & 0}
\]
be a minimal projective resolution of $M$ in $\mod(A)$. We deduce that
we have a minimal projective resolution of $(M \leadsto 0)$ in $\mod(\cp)$ given by
\[
\xymatrix{
0 \ar[r] & (P_1^M \leadsto \nu(P_0^M)) \ar[r] & (P_0^M \leadsto \nu(P_0^M)) \ar[r] &
(M \leadsto 0) \ar[r] & 0.}
\]
For an arbitrary object $L$ of $\mod(\cp)$, a morphism $L \to (M \leadsto 0)$
factors through a projective if and only if it factors through 
$(P_0^M \leadsto \nu(P_0^M))$. Using this we see that for an object
$P_1 \to P_0$ of $\ch_Q$, the module 
\[
K_{LR}(P_1 \leadsto \nu(P_0)) 
\]
is the image
\[
 \im(\Hom((P_1 \leadsto \nu(P_0)), 
(P_0^M \leadsto \nu(P_0^M))) \to \Hom((P_1 \leadsto \nu(P_0)), (M \leadsto 0)).
\]
Clearly this image identifies with
\[
\im(\Hom(P_0,M) \to \Hom(P_1, M)) = \hat{M}(P_1 \to P_0).
\]
It remains to be shown that $K_{LR}(M)$ vanishes at all indecomposables
$L$ not belonging to the image of $\ch_Q$ in $\mod(\cp)$. Indeed, for
an object $L$ of $\mod(\cp)$, let $\Omega(L)$ denote the kernel of
a projective cover and $\Omega^{-1}(L)$ the cokernel of an
injective hull. Since $Q$ is a Dynkin quiver and the stable category of
$\mod(\cp)$ is equivalent to the derived category of $\mod(A)$, the
indecomposable objects of $\mod(\cp)$ are exactly the projective-injective
indecomposables and the objects $\Omega^p(L)$, where $p\in\Z$ and
$L$ is an indecomposable $A$-module. Clearly, the only indecomposable
projective objects possibly admitting a non zero morphism to $M \leadsto 0$
are the $P\leadsto \nu(P)$, where $P$ is an indecomposable projective
$A$-module. Now let $L$ be an indecomposable $A$-module
and
\[
\xymatrix{ 0 \ar[r] & P_1^L \ar[r] & P_0^L \ar[r] & L \ar[r] & 0}
\]
a minimal projective resolution. We have $K_{LR}(M)(L\leadsto 0) =0$ since
\[
\Hom((L\leadsto 0), (P_0^M \leadsto \nu(P_0^M))) =0.
\]
We have 
\[
\Omega(L) = (P_1^L \leadsto \nu(P_0^L))
\]
and so this object belongs to $\ch_Q$. It is easy to check that for $p\geq 2$,
the object $\Omega^p(L)$ has vanishing component in degree $0$ and
so does not admit non zero morphisms to $M$. Now let
\[
\xymatrix{ 0 \ar[r] & L \ar[r] & I^0_L \ar[r] & I^1_L \ar[r] & 0}
\]
be a minimal injective coresolution. We have
\[
\Omega^{-1}(L) = (\nu^{-1}(I^0_L) \leadsto I^1_L)
\]
and so $\Hom(\Omega^{-1}(L), (0 \leadsto M))=0$, where now the two components
are concentrated in degrees $-1$ and $0$. For $p\geq 2$, the object $\Omega^{-p}(L)$
has vanishing component in degree $0$ and so we have
$\Hom(\Omega^{-p}(L),(0 \leadsto M))=0$ for $p\geq 2$ as well.
\end{proof}

\subsection{The example $A_3$} \label{ss:example-A3}
We illustrate section~\ref{ss:link-to-CFRs-desingularization} by an example.
Let $Q$ be the linearly oriented quiver 
\[
\xymatrix{1 \ar[r] & 2 \ar[r] & 3}
\]
of type $A_3$. Figure~\ref{fig:example-A3} shows the quiver
of $\cR_C$, where $C$ is the configuration obtained from
Proposition~\ref{prop:mesh-category-isomorphisms}. 
Thus, the quiver is also the Auslander-Reiten quiver
of the repetitive algebra of $Q$. It contains the Auslander-Reiten
quiver of the path algebra $kQ$ as the triangle whose base
is formed by the simples $S_1$, $S_2$, $S_3$. The
vertices marked by $\bullet$ correspond to the 
indecomposables in the image of the functor
$\ch_Q \to \mod(\cp)$.

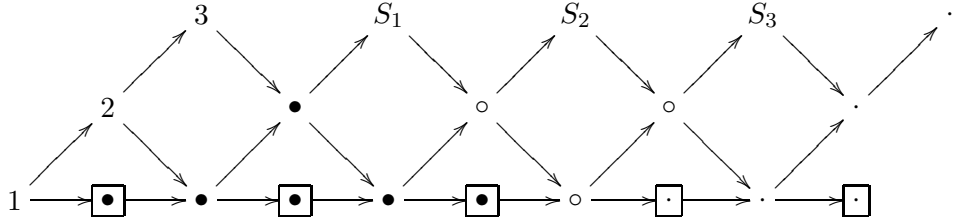
\begin{figure}
\[
\begin{xy} 0;<0.7pt,0pt>:<0pt,-0.7pt>:: 
(0,100) *+{1} ="0",
(50,50) *+{2} ="1",
(100,0) *+{3} ="2",
(50,100) *+[F]{\bt} ="3",
(100,100) *+{\bt} ="4",
(150,50) *+{\bt} ="5",
(200,0) *+{S_1} ="6",
(150,100) *+[F]{\bt} ="7",
(200,100) *+{\bt} ="8",
(250,50) *+{\circ} ="9",
(300,0) *+{S_2} ="10",
(250,100) *+[F]{\bt} ="11",
(300,100) *+{\circ} ="12",
(350,50) *+{\circ} ="13",
(400,0) *+{S_3} ="14",
(350,100) *+[F]{\cdot} ="15",
(400,100) *+{\cdot} ="16",
(450,50) *+{\cdot} ="17",
(500,0) *+{\cdot} ="18",
(450,100) *+[F]{\cdot} ="19",
"0", {\ar"1"},
"0", {\ar"3"},
"1", {\ar"2"},
"1", {\ar"4"},
"2", {\ar"5"},
"3", {\ar"4"},
"4", {\ar"5"},
"4", {\ar"7"},
"5", {\ar"6"},
"5", {\ar"8"},
"6", {\ar"9"},
"7", {\ar"8"},
"8", {\ar"9"},
"8", {\ar"11"},
"9", {\ar"10"},
"9", {\ar"12"},
"10", {\ar"13"},
"11", {\ar"12"},
"12", {\ar"13"},
"12", {\ar"15"},
"13", {\ar"14"},
"13", {\ar"16"},
"14", {\ar"17"},
"15", {\ar"16"},
"16", {\ar"17"},
"16", {\ar"19"},
"17", {\ar"18"},
\end{xy}
\]
\caption{Example $A_3$}
\label{fig:example-A3}
\end{figure}

\section{An example in tilted type $D_4$}
\label{s:example-tilted-D4}

We illustrate Corollary~\ref{cor:desing-quiver-grassmannian}
with an example of tilted type $D_4$.
We consider the algebra $B$ given by the square
\[
\xymatrix{
1 \ar[d] \ar[r] & 2 \ar[d] \\
3 \ar[r] & 4
}
\]
with the commutativity relation ($B$ is tilted of type $D_4$). 
Let $M$ be the $B$-module given  as the direct sum of the three modules $I_1=P_4$, $P_2$ and $I_3$:
\[
\xymatrix{
k   & k \ar[l]_{\id} \\
k \ar[u]^{\id} & k \ar[l]^{\id} \ar[u]_{\id}
}
\quad
\xymatrix{
k   & k \ar[l]_{\id} \\
0 \ar[u] & 0 \ar[l] \ar[u]
}
\quad
\xymatrix{
0  & 0 \ar[l] \\
k \ar[u] & k. \ar[l]_{\id} \ar[u]
}
\]
All submodules of $M$ with dimension vector $(1,1,1,1)$ are isomorphic to 
one of the modules  $I_1$ and  $P_2\oplus I_3$.
The space $\Hom( I_1, P_2 \oplus I_3 )$ is of dimension one.
Let us denote by $L$ the submodule isomorphic to $P_2 \oplus I_3$,
where we embed the factor $P_2$ into $P_4$. We obtain 
$\Hom(L, M/L)\cong \Hom(P_2\oplus I_3, P_2\oplus I_3)$, 
which is two-dimensional. 
Let $N$ denote the submodule isomorphic to $P_2 \oplus I_3$, where we embed $N$
into $P_2\oplus I_3$. In this case, we have
$\Hom(N,  M/N) \cong \Hom(P_2\oplus I_3, I_1)$,
which is one-dimensional.  
Thus, the 
tangent spaces of the quiver Grassmannian $\Gr_{(1,1,1,1)^t}(M)$ 
at the points $L$ and $N$ do not have the same dimension.  
In fact the quiver Grassmannian consists of two irreducible components, 
both isomorphic to the projective line, and which intersect at the singular point $L$. 
The two components are given by the closures of $S_{[I_1]}$ and $S_{[P_2 \oplus I_3]}$, where 
$S_{[N]} $ denotes the irreducible and locally closed subset in $\Gr_{(1,1,1,1)^t}(M)$ of submodules isomorphic to $N$. 
Hence $\{v_1, v_2 \}=\cv_w(M)$ is given by $\dimv K_{LR} (I_1)=(v_1,w)$ and $\dimv K_{LR}(P_2 \oplus I_3)=(v_2,w)$. 

Let us choose the orientation
\[
\xymatrix@R=0.35cm@C=0.35cm{
& & 3\\
1 \ar[r] & 2 \ar[ru] \ar[rd] &\\
&& 4
}
\]
of the Dynkin diagram $D_4$. Removing the boxes from the quiver 
in Figure~\ref{fig:AR-quiver-rep-alg-D4} gives the Auslander-Reiten 
quiver of the derived category $\cd_Q$.
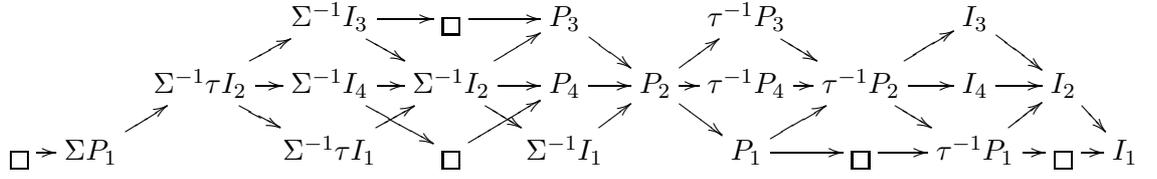
\begin{figure}
\[
\xymatrix@R=0.25cm@C=0.25cm{ 
&&& \Sigma^{-1} I_3  \ar[rd] \ar[r] & \boxed{} \ar[r]& P_3 \ar[rd] & & \tau^{-1} P_3 \ar[rd] & & I_3 \ar[rd] & &  \\
&& \Sigma^{-1} \tau I_2\ar[ru] \ar[rd] \ar[r] & \Sigma^{-1} I_4 \ar[r] \ar[rd] & \Sigma^{-1} I_2 \ar[r] \ar[rd] \ar[ru] & P_4 \ar[r] & P_2\ar[r] \ar[rd] \ar[ru]& \tau^{-1} P_4 \ar[r] & \tau^{-1} P_2\ar[r] \ar[rd] \ar[ru] & I_4 \ar[r] & I_2 \ar[rd]  &      \\
\boxed{}  \ar[r] & \Sigma P_1   \ar[ru] && \Sigma^{-1} \tau I_1 \ar[ru] & \boxed{} \ar[ru] & \Sigma^{-1} I_1 \ar[ru] & &  P_1 \ar[ru] \ar[r] & \boxed{} \ar[r] & \tau^{-1} P_1 \ar[ru] \ar[r] 
& \boxed{} \ar[r] & I_1 
}
\]
\caption{The AR-quiver of the repetitive algebra of $D_4$}
\label{fig:AR-quiver-rep-alg-D4}
\end{figure}
In this category, the suspension functor $\Sigma$ is isomorphic
to $\tau^{-3}$ and the Coxeter number of $D_4$ equals $h=6$.
Thus, the configuration $C$ of section~\ref{ss:piecewise-hereditary}
 is given by the vertices
corresponding to the objects $\tau^{5p+2} S_i$ of $\cd_Q$ and
the quiver of $\cR_C$ is the one displayed in Figure~\ref{fig:AR-quiver-rep-alg-D4}.
Using Theorem~2.4 of \cite{KellerScherotzke13a} one verifies that
the quiver with relations of $\cs_C$ is the quiver
\[
\xymatrix{ 
\cdots  & \boxed{2} \ar[rd] \ar[rr] & &\boxed{} \ar[rd] & \cdots \\
\boxed{1} \ar[ru] \ar[r] & \boxed{3} \ar[r] & \boxed{4} \ar[ru] \ar[r]& \boxed{} \ar[r] & \boxed{}
}
\]
with commuting relations in the squares and no relations in the triangles. 
The intermediate extension of $M$ is the direct sum of  the intermediate extensions 
of its three summands, namely the modules
\[
\xymatrix{ 
&&&  k \ar[ld] & k \ar[l]& k \ar[ld] \ar[l]& &   & \\
&& k \ar[ld]  &k \ar[l]  & k \ar[l] \ar[lu] \ar[ld] & k \ar[ld] \ar[l] &  k \ar[l] \ar[lu] \ar[ld]&  & \\
k &  k \ar[l] && \ar[lu] & k \ar[lu] & \ar[lu] & & k  \ar[lu]  & k, \ar[l]
}
\]
\[
\xymatrix{ 
&&&  k \ar[ld] & k \ar[l]&  \ar[ld] \ar[l]& &   & \\
&& k\ar[ld]  &  \ar[l]  &\ar[l] \ar[lu] \ar[ld] &  \ar[ld] \ar[l] &  \ar[l] \ar[lu] \ar[ld]&  & \\
k & k  \ar[l] && \ar[lu] &   \ar[lu] & \ar[lu] & &   \ar[lu]  &  \ar[l]
}
\]
and 
\[
\xymatrix{ 
&&&   \ar[ld] &  \ar[l]&  \ar[ld] \ar[l]  & &   & \\
&& \ar[ld]  & \ar[l]  &\ar[l] \ar[lu] \ar[ld] & k \ar[ld] \ar[l] & k  \ar[l] \ar[lu] \ar[ld]&  & \\
 &   \ar[l] && \ar[lu] & k \ar[lu] & \ar[lu] & &  k \ar[lu]  & k. \ar[l]
}
\]
Here all non-labelled vertices are represented by zero spaces and all
possibly non zero maps are the identity.
The intermediate extension of the generic subrepresentations is given by the first summand $K_{LR} ( I_1)$ and 
the sum of the last two summands $K_{LR} (P_2) \oplus K_{LR} (I_3)$ of $K_{LR} (M)$. 
We conclude that the desingularization map of 
Corollary~\ref{cor:desing-quiver-grassmannian} is the map
\[
\Gr_{\dimv K_{LR} (I_1)}(K_{LR} (M) ) \coprod  \Gr_{\dimv K_{LR} (I_3\oplus P_2) }(K_{LR} (M) ) 
\to \Gr_{(1,1,1,1)^t}(M)
\]
taking $U$ to $\res(U)$.
Finally, it is easy to see that both Grassmannians on the left hand side 
are isomorphic to projective lines.


\def\cprime{$'$} \def\cprime{$'$}
\providecommand{\bysame}{\leavevmode\hbox to3em{\hrulefill}\thinspace}
\providecommand{\MR}{\relax\ifhmode\unskip\space\fi MR }
\providecommand{\MRhref}[2]{%
  \href{http://www.ams.org/mathscinet-getitem?mr=#1}{#2}
}
\providecommand{\href}[2]{#2}

\end{document}